\theoremstyle{plain}
\newtheorem{prop}{Proposition}
\newtheorem*{thm}{Theorem}
\newtheorem{lem}{Lemma}
\newtheorem{cor}{Corollary}
\theoremstyle{definition}
\newtheorem{example}{Example}
\newtheorem{defn}{Definition}
\theoremstyle{remark}
\newtheorem{rem}{Remark}
\newcommand{\lie}[1]{\mathfrak{#1}}
\newcommand\bc{\mathbb C}
\newcommand\bz{\mathbb Z}
\newcommand\bn{\mathbb N}
\title{$\pi$-systems and the embedding problem for rank $2$ Kac-Moody Lie algebras}
\author{Irfan Habib}\address{Department of Mathematics, Indian Institute of Science, Bangalore 560012}
\email{irfanhabib@iisc.ac.in}
\thanks{I.H. was partially supported by DST/INSPIRE/03/2019/000172.}
\author{Chaithra P}
\address{Department of Mathematics, Indian Institute of Science, Bangalore 560012}
\email{chaithrap@iisc.ac.in}
\thanks{C.P. was partially supported by Prime Minister Research Fellowship.}
\begin{document}
\begin{abstract}
    $\pi$-systems are fundamental in the study of Kac-Moody Lie algebras since they arise naturally in the embedding problems. Dynkin introduced them first and showed how they also appear in the classification of semisimple subalgebras of a semisimple Lie algebra. In this article, we explicitly classify the $\pi$-systems associated to rank $2$ Kac-Moody Lie algebras and prove that in most of the cases they are linearly independent. This classification allows us to determine the root generated subalgebras and which in turn determines all possible Kac-Moody algebras that can be embedded in a rank $2$ Kac-Moody algebra as subalgebras generated by real root vectors. Additionally, following the work of Naito we provide examples illustrating how Borcherds Kac-Moody algebras can also be embedded inside a rank $2$ Kac-Moody algebra.
\end{abstract}
\maketitle
\section{Introduction}

In one of his influential papers, Dynkin classified semisimple subalgebras of finite dimensional semisimple Lie algebras (see \cite{dynkin1952semisimple}). An important key ingredient to achieve this goal was the study and classification of so-called symmetric regular subalgebras; these are, by definition, invariant under the action of a fixed Cartan subalgebra and the corresponding set of roots is symmetric. Furthermore, for a given finite-dimensional semisimple Lie algebra $\mathring{\lie g}$ with set of roots $\mathring{\Delta}$, Dynkin obtained bijective correspondences between the following sets: 
\begin{enumerate}
 \item  the set of symmetric regular subalgebras of $\mathring{\lie g}$,
     \item the set of (linearly independent) $ \pi\text{- systems of }\mathring{\Delta}\ \text{contained in  } \mathring{\Delta}^+$,
     \item the set of $ \text{closed subroot systems of } \mathring{\Delta}$.
 \end{enumerate}
 Using this correspondence, Dynkin reduced the problem of classifying symmetric regular subalgebras to the combinatorial problem of classifying closed subroot systems. The classification of closed subroot systems can be obtained from the classification of maximal closed subroot systems, which has been achieved in \cite{BdS}. Moreover, any symmetric regular subalgebra corresponds to a Cartan-invariant semisimple subalgebra of $\mathring{\lie g}$. This, in principle, finishes the classification of regular semisimple subalgebras of a finite dimensional semisimple Lie algebra. 
 
 The analogue picture for an arbitrary Kac-Moody algebra $\lie g$ fails and first counterexamples can be constructed even for low rank untwisted affine Lie algebras. Although the structure of symmetric regular subalgebras is not completely determined by the combinatorics of their root systems, the first step in understanding them is to investigate the combinatorial picture. For symmetrizable Kac-Moody algebras, it is a tough ask to classify all the Cartan invariant subalgebras. Almost after $70$ years of Dynkin's work, Roy and Venkatesh classified the closed subroot systems of an affine root system and proved that similar results to Dynkin hold for affine Lie algebras (see \cite[Theorem 11.3.1]{roy2019maximal} for a precise statement). This discovery is very instigating and prompts us to ask whether similar phenomena holds for a general Kac-Moody Lie algebra. Before discussing about the existing literature for the general Kac-Moody case, we point out that there are attempts to understand the regular subalgebras of an affine Kac-Moody algebra both from an algebraic and combinatorial point of view. In \cite{barnea1998graded} the authors classified the maximal graded subalgebras of an affine Kac-Moody algebra. Anna Felikson et al. started the classification of regular subalgebras of affine Kac-Moody algebras (which are related to the closed subroot systems of real affine root systems) in \cite{felikson2008regular} combinatorially and it was completed in \cite{roy2019maximal}, see also \cite{KV21a}. They also provide the complete list of maximal rank root subroot systems of an affine root system \cite[Table 1]{felikson2008regular}.

 For a symmetrizable Kac-Moody algebra, the key initial results about $\pi$-systems and regular subalgebras were obtained by Morita in \cite{morita1989certain} and Naito in \cite{naito1992regular}. Morita classified certain rank $2$ subsystems of a symmetrizable Kac-Moody algebra using linearly independent $\pi$-systems. Naito, on the other hand, generalized the definition including imaginary roots in a $\pi$-system. With this extended definition, he showed that even algebras of Borcherds Kac-Moody type can be embedded inside a Kac-Moody algebra. We demonstrate this fact in Section \ref{borcherds} by examples. Feingold and Nicolai rediscovered the definition of a $\pi$-system to study subalgebras of Hyperbolic Kac-Moody algebra in \cite{feingold2004subalgebras}. They relaxed the condition of linear independence but imposed the restriction that a $\pi$-system contains positive real roots only. In the same paper, using the $\pi$-system technique, they produced several families of indefinite type subalgebras of a hyperbolic Kac-Moody algebra. Carbone et al. studied the embedding problem in \cite{carbone2021varvec} and they provided a simpler proof of the following Theorem which is originally from Naito. This Theorem shows how $\pi$-systems are closely related to the embedding problem.
 \begin{thm}[\cite{carbone2021varvec,naito1992regular}]
     Let $A$ be a finite GCM and $\Sigma$ be a linearly independent $\pi$-system in $\lie \lie g(A).$ Let $B=B_\Sigma$ be the GCM corresponding to $\Sigma$ and  $\lie g(B)$ be the Kac-Moody algebra. Then there exists an embedding of Lie algebras $\lie g(B) \hookrightarrow \lie g(A).$
 \end{thm}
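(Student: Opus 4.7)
The plan is to produce an explicit homomorphism $\varphi\colon \mathfrak{g}(B) \to \mathfrak{g}(A)$ on Chevalley generators and then exploit the Serre/Gabber--Kac presentation of $\mathfrak{g}(B)$. Write $\Sigma=\{\beta_1,\dots,\beta_n\}$ so that $B=(b_{ij})=(\langle\beta_j,\beta_i^\vee\rangle)$. Because each $\beta_i$ is a real root of $\mathfrak{g}(A)$, the root space $\mathfrak{g}(A)_{\beta_i}$ is one-dimensional, so I can choose nonzero vectors $E_i\in\mathfrak{g}(A)_{\beta_i}$ and $F_i\in\mathfrak{g}(A)_{-\beta_i}$ rescaled so that $H_i:=[E_i,F_i]=\beta_i^\vee\in\mathfrak{h}(A)$. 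Letting $(e_i,f_i,h_i)$ denote the Chevalley generators of $\mathfrak{g}(B)$, I would attempt to define $\varphi$ on them by $e_i\mapsto E_i$, $f_i\mapsto F_i$, $h_i\mapsto H_i$.

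The bulk of the work is verifying that these triples satisfy the defining relations of $\mathfrak{g}(B)$. The Cartan-type relations $[H_i,H_j]=0$, $[H_i,E_j]=b_{ij}E_j$, $[H_i,F_j]=-b_{ij}F_j$ follow at once from $H_i=\beta_i^\vee$ and the definition of $b_{ij}$. The mixed bracket $[E_i,F_j]=\delta_{ij}H_i$ with $i\neq j$ holds because $[E_i,F_j]\in\mathfrak{g}(A)_{\beta_i-\beta_j}$, and the $\pi$-system axiom forbids $\beta_i-\beta_j$ from being a root, so this root space is zero. The substantive step is the Serre relation $(\operatorname{ad} E_i)^{1-b_{ji}}E_j=0$ (and its $F$-analogue). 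Here I would use that $\beta_i$ is real: the $\mathfrak{sl}_2$-triple $(E_i,F_i,H_i)$ acts locally finitely on $\mathfrak{g}(A)$, so the $\beta_i$-string through $\beta_j$ has the standard shape $\beta_j-p\beta_i,\dots,\beta_j+q\beta_i$ with $p-q=b_{ji}$. Since $\beta_j-\beta_i$ is not a root (again by the $\pi$-system axiom), we have $p=0$, hence $q=-b_{ji}$, which places $\beta_j+(1-b_{ji})\beta_i$ outside the string and forces the desired vanishing. The $F$-side is symmetric.

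For the injectivity of $\varphi$, I would argue that the linear independence of $\Sigma$ forces $\varphi|_{\mathfrak{h}(B)}$ to be injective, once one selects a realization of $B$ with $\mathfrak{h}(B)$ of minimal dimension (or, equivalently, works with the derived subalgebra $\mathfrak{g}'(B)$). Then $\ker\varphi$ is a $\mathbb{Z}^n$-graded ideal of $\mathfrak{g}(B)$ meeting $\mathfrak{h}(B)$ trivially, and the Gabber--Kac theorem identifies such ideals as zero. If one insists on the full $\mathfrak{g}(B)$ rather than $\mathfrak{g}'(B)$, one can either enlarge $\mathfrak{h}(A)$ by auxiliary derivations or pick a compatible lift of the $h_i$ from a larger realization. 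I expect this injectivity bookkeeping to be the main technical obstacle: the Serre relations are controlled cleanly by real-root $\mathfrak{sl}_2$-theory, but ensuring that no piece of $\mathfrak{g}(B)$ collapses in $\mathfrak{g}(A)$ requires choosing the Cartan data on both sides compatibly and invoking a nontrivial structural result (Gabber--Kac) to rule out spurious relations beyond the Serre relations.
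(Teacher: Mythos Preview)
Your argument is correct and is precisely the standard one appearing in the cited references \cite{carbone2021varvec,naito1992regular}; the paper itself does not supply an independent proof but only quotes this theorem as background. Two small remarks: first, with the paper's convention $b_{ij}=\langle\beta_j,\beta_i^\vee\rangle$ the root-string identity reads $p-q=b_{ij}$ (not $b_{ji}$), so the Serre relation you verify is $(\operatorname{ad}E_i)^{1-b_{ij}}E_j=0$, matching the defining relations of $\mathfrak g(B)$; second, the injectivity step does not really need Gabber--Kac but only the defining property of $\mathfrak g(B)$ as $\tilde{\mathfrak g}(B)$ modulo the \emph{maximal} ideal trivially intersecting the Cartan---Gabber--Kac is what lets you produce the map in the first place by checking only the Serre relations.
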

 Very recently, Irfan et al. in \cite{habib2023root} proved that Dynkin's results hold in the symmetrizable Kac-Moody setting. In particular, they proved that the classification of root generated subalgebras is equivalent to the classification of $\pi$-systems consisting of real positive roots. In general, for a closed subroot system $\Psi$ (see Definition \ref{keydefn}), the $\pi$-system $\Pi(\Psi)$ associated with $\Psi$ is neither finite nor linearly independent. Even if $\Pi(\Psi)$ is finite but not linearly independent, the algebra $\lie g'(B_{\Pi(\Psi)})$ (see Section \ref{secpi}) has more relations than the usual relations plus the Serre relations as pointed out in \cite[Page 14, Comment 1]{Henn06Geometric}. This makes the study of the root generated subalgebras quite challenging. In \cite{habib2023root} the authors identified the extra relations by determining the kernel of the map $\lie g'(B_{\Pi(\Psi)})\to \lie g(A).$ They proved that the kernel of the abovementioned map is contained in the centre of $\lie g'(B_{\Pi(\Psi)})$ and thus no essential information about the roots is lost. It seems that \cite{habib2023root} is the first article in the literature that explicitly deals with infinite $\pi$-systems in the context of the embedding problem.

 Nevertheless, the study of $\pi$-systems holds significance in its own right. They are widely used in physics to understand the regular embedding problem; see \cite{damour03cosmological, feingold83ahyperbolic, superspace81} for an exposition.
 The real roots and the representations of $E_{10}$ have been shown to correspond to the fields of $11$ dimensional supergravity at low levels \cite{damour03cosmological}. Viswanath proved that the Lie algebra $E_{10}$ contains a regular subalgebra isomorphic to $AE_3$ \cite{viswanath08embedding}. This reflects the inclusion of Einstein gravity into $11$-dimensional supergravity. Later in \cite[Theorem 6.2]{carbone2021varvec} the authors proved that all $\pi$-systems of type $AE_3$ in $E_{10}$ are conjugate (upto negatives) under the Weyl group of $E_{10},$ indicating that there is a canonical inclusion of Einstein gravity into $11$- dimensional supergravity. We also remark that there is another way to embed $AE_3$ into $E_{10}$ via the gravity truncation method of \cite{damour09fermionic}. In \cite{Henn06Geometric} the authors studied the solutions with non-zero electric field determined by geometric configurations $(n_m,g_3), n\le 10$ and they showed that these solutions are associated to rank $g$ regular subalgebras of $E_{10}.$

 The lack of detailed information about $\pi$-systems and regular subalgebras beyond the affine case motivates us to study the $\pi$-systems of rank $2$ Kac-Moody Lie algebra $\lie g$. We shall use the explicit description of real roots given in \cite{kac1990infinite}. To understand the symmetric subalgebras generated by real root vectors of $\lie g,$ it is enough to understand the $\pi$-systems contained in the positive real roots (c.f. \cite[Theorem 1]{habib2023root}). The complete classification of such $\pi$-systems is achieved in Theorem \ref{Thmab}. In \cite[Table 2]{carbone2021commutator} the authors listed the GCM for all rank $2$ subroot systems of $\lie g.$ Along with \cite[Theorem 1]{habib2023root} their classification describes the possible Cartan types of the root generated subalgebras of $\lie g.$ Our classification aids in determining the structure and Cartan types of these subalgebras in a more explicit fashion by determining the Cartan matrix of a given root generated subalgebra, as summarized in Table \ref{tablesummary}. Additionally, we go one step further and completely classify the $\pi$-systems in rank $2$, including those containing negative roots. The complete classification of $\pi$-systems has been obtained in Theorem \ref{NThmab} by determining precisely when a sum of two real roots is a root (real or imaginary). It is worth noting that the article \cite{carbone2021commutator} has already determined when such sums give rise to real roots. Finally, we extend our analysis to include the non-reduced affine case in the appendix.\medskip

 \noindent \textit{The paper is organised as follows:} Let $\lie g$ be a rank $2$ Kac-Moody algebra. In Section \ref{secprelim} we recall the necessary definitions and properties of Kac-Moody algebras, especially for rank $2$ Kac-Moody algebras. We also explain how $\pi$-systems appear naturally in the embedding problem. In Section \ref{lipisystems} we classify the linearly independent $\pi$-systems of $\lie g$ and describe the root generated subalgebras upto isomorphism. In Section \ref{clpisystems} we classify all the $\pi$-systems of $\lie g.$ In Appendix \ref{app} we describe the $\pi$-systems of the non-reduced affine Lie algebra of rank $2.$

 This research was greatly facilitated by experiments carried out in the computational algebra systems SageMath \cite{sagemath}.


\section{Preliminaries}\label{secprelim}
\subsection{Notations} Throughout this paper we denote by $\mathbb{C}$ the field of complex numbers and by $\mathbb{Z}$ (resp. $\mathbb{Z}_{+}$, $\mathbb{N}$, $\mathbb{R}$) the set of integers (resp. non-negative, positive integers, real numbers). 

\subsection{Kac-Moody algebras} Let $A=(a_{ij})$ be a symmetrizable generalized Cartan matrix (GCM in short) of order $n$ and let $\lie g(A)$ be the corresponding Kac-Moody Lie algebra with Cartan subalgebra $\lie h(A)$. Then $\lie g(A)$ is generated as a Lie algebra by $\lie h(A),\ e_i^{\pm}:=e_i(A)^\pm,\ i=1,2,\cdots,n$ satisfying the relations
$$[h,h']=0,\ [h,e_i^\pm]=\pm\alpha_i(h)e_i^\pm,\ [e_i^+,e_j^-]=\delta_{ij}\alpha_i^\vee,\ \ h,h'\in\lie h(A),\ i=1,2,\cdots,n,$$
$$(\mathrm{ad}e_{i}^\pm)^{1-a_{ij}}(e_j^\pm)=0,\ \ 1\le i\neq j\le n,$$ where $\{\alpha_1^\vee,\alpha_2^\vee,\cdots,\alpha_n^\vee\}$ and $\{\alpha_1,\alpha_2,\cdots,\alpha_n\}$ are linearly independent subsets of $\lie h(A)$ and $\lie h(A)^*$ respectively such that $\langle \alpha_j,\alpha_i^\vee\rangle=a_{ij}.$ For $\lambda\in\lie h(A)^*$ and $h\in \lie h(A),$ we denote the usual pairing by $\langle \lambda,h\rangle.$

Let $\Delta(A)$ (resp. $\Delta_{\mathrm{re}}(A), \Delta_{\mathrm{im}}(A)$) be the set of all roots (resp. real roots, imaginary roots) of $\lie g(A)$ with respect to $\lie h(A).$ Let $(\cdot,\cdot)_{A}$ be the symmetric, invariant bilinear form on $\lie g.$ Then we have $$\Delta_{\mathrm{re}}(A)=\{\alpha\in\Delta:(\alpha,\alpha)_A>0\},\ \ \ \Delta_{\mathrm{im}}(A)=\{\alpha\in\Delta:(\alpha,\alpha)_A\le 0\}.$$ We choose a Borel subalgebra $\lie b(A)$ of $\lie g(A)$ and denote by $\Delta^+(A)$ (resp. $\Delta^-(A)$) the set of all positive (resp. negative) roots of $\lie g(A).$ We also define $$\Delta_{\mathrm{re}}^{\pm}(A):=\Delta^{\pm}(A)\cap \Delta_{\mathrm{re}}(A),\ \ \ \Delta_{\mathrm{im}}^{\pm}(A):=\Delta^{\pm}(A)\cap \Delta_{\mathrm{im}}(A).$$
Let $W(A)$ be the Weyl group of $\lie g(A)$ which is a Coxeter group generated by reflections $s_\alpha,\alpha\in\Delta_{\mathrm{re}}(A).$ For a subset $S\subseteq\Delta_{\mathrm{re}}(A),$ set $W_S(S):=\langle s_\beta:\beta\in S \rangle.$
Let $\{\alpha_1(A),\alpha_2(A),\cdots,\alpha_n(A)\}$ be the set of all simple roots of $\lie g(A).$ We denote by $Q(A)$ (resp. $Q^\pm(A)$) the set of all $\bz$ (resp. $\bz_{\pm}$) linear combination of $\alpha_1(A),\alpha_2(A),\cdots,\alpha_n(A).$ Most of the times, we shall suppress the matrix $A$ and write $\lie g,\Delta,\Delta^+$ etc for $\lie g(A),\Delta(A),\Delta^+(A)$ etc when $A$ is clear from the context. In general, determining which elements of $Q$ is a root is a tough ask but we have a good description of roots when $A$ is of finite, affine or hyperbolic type (\cite[Proposition 5.10]{kac1990infinite}).
\begin{lem}\label{keylem}
    Let $A$ be a GCM of finite, affine or hyperbolic type. Then 
    \begin{enumerate}
        \item the set of all real roots of $\lie g$ is given by $$\left\{\alpha=\sum_{i=1}^n k_i \alpha_i\in Q:(\alpha,\alpha)>0\text{ and } \frac{k_j(\alpha_j,\alpha_j)}{(\alpha,\alpha)}\in \bz\text{ for all }1\le j\le n\right\},$$
        \item and the set of all imaginary roots of $\lie g$ is given by 
        $\{\alpha\in Q\backslash \{0\}: (\alpha,\alpha)\le 0\}.$
    \end{enumerate}
\end{lem}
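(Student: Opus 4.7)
The plan is to establish both parts by splitting into cases according to whether $A$ is of finite, affine, or hyperbolic type. Part (2) is the core of the statement, and once it is in hand, Part (1) follows by combining the general classification of real roots in any symmetrizable Kac-Moody algebra with the complementary description provided by Part (2).

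For Part (2), one containment is immediate from the definitions recalled above: every imaginary root satisfies $(\alpha,\alpha)\le 0$. For the reverse containment I would appeal to the standard description of the imaginary cone, namely
$$\Delta_{\mathrm{im}}^+ \;=\; \bigcup_{w\in W} w\cdot K,\qquad K = \{\alpha\in Q^+\setminus\{0\} : (\alpha,\alpha_i)\le 0\ \forall\, i,\ \mathrm{supp}(\alpha)\text{ connected}\},$$
which holds for any symmetrizable Kac-Moody algebra. Then I would analyse $K$ case by case. In finite type $K=\emptyset$ since the bilinear form is positive definite. In affine type $K=\bz_{>0}\delta$ where $\delta$ is the primitive imaginary root, and since $W$ fixes $\delta$ we obtain $\Delta_{\mathrm{im}}^+=\bz_{>0}\delta$, which coincides with the set of norm-zero positive elements of $Q$ (and no element of $Q^+$ has strictly negative norm). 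In hyperbolic type, connectedness of the Dynkin diagram makes the support hypothesis in $K$ automatic; given any $\alpha\in Q^+\setminus\{0\}$ with $(\alpha,\alpha)\le 0$, a standard reduction using simple reflections carries $\alpha$ to a $W$-translate lying in $K$, so $\alpha\in \Delta_{\mathrm{im}}^+$.

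For Part (1), one direction is general: a real root $\alpha=w\alpha_i$ satisfies $(\alpha,\alpha)=(\alpha_i,\alpha_i)>0$, and its coroot $\alpha^\vee=2\alpha/(\alpha,\alpha)$ lies in the coroot lattice $\sum_j\bz\,\alpha_j^\vee$; rewriting this containment via the identification $\alpha_j^\vee\leftrightarrow 2\alpha_j/(\alpha_j,\alpha_j)$ yields exactly the divisibility conditions on the $k_j$. For the converse, suppose $\alpha=\sum k_j\alpha_j$ satisfies both conditions; by Part (2), $\alpha\notin\Delta_{\mathrm{im}}$, so it suffices to produce $\alpha$ as an element of $\Delta$. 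The positive-norm and integrality hypotheses together allow a Weyl-group reduction that exhibits $\alpha$ as $W$-conjugate to a simple root $\alpha_i$, hence a real root.

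The main obstacle is the converse of Part (1) in the hyperbolic case: unlike finite and affine types, the real roots of a hyperbolic algebra admit no closed-form enumeration, and the Weyl-group reduction must be controlled using the Lorentzian signature of the form together with the structural constraint that every proper subdiagram of a hyperbolic diagram is of finite or affine type. The integrality condition $k_j(\alpha_j,\alpha_j)/(\alpha,\alpha)\in\bz$ is essential here, since it rules out candidate elements of $Q$ that have the correct norm but fail to lie in the $W$-orbit of a simple root.
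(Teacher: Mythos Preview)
The paper does not prove this lemma at all: it is stated with a direct citation to \cite[Proposition~5.10]{kac1990infinite} and then used as a black box throughout. Your outline follows the standard argument found there (identify $\Delta_{\mathrm{im}}^+$ as the $W$-saturation of the fundamental set $K$, then analyse $K$ case by case according to the type of $A$, and deduce Part~(1) from Part~(2) together with a Weyl-group reduction), so there is nothing substantive to compare.

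One point in your sketch is looser than it should be. In the hyperbolic case of Part~(2) you write that ``connectedness of the Dynkin diagram makes the support hypothesis in $K$ automatic''; taken literally this is false, since connectedness of the ambient diagram says nothing about $\mathrm{supp}(\alpha)$. What actually forces connected support after the height-reducing Weyl argument is the fact you invoke later for Part~(1), namely that every proper subdiagram of a hyperbolic diagram is of finite or affine type: if $\mathrm{supp}(\alpha)=S_1\sqcup S_2$ were disconnected, each $S_i$ would carry a positive semidefinite form, and since two disjoint affine subdiagrams cannot sit inside a hyperbolic diagram, one obtains $(\alpha,\alpha)>0$, a contradiction. With this clarification your plan is complete and matches the reference the paper cites.
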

\begin{defn}
    A $\lie h$-invariant Lie subalgebra of $\lie{g}$ is called a \textit{regular subalgebra}. Given a regular subalgebra $\lie{s}$, we define
$$\Delta(\lie{s}):=\{\alpha\in\Delta:(\lie{g}_{\alpha}\cap \lie s)\neq 0\},\ \ \Delta(\lie{s})_{\mathrm{re}}:=\Delta(\lie{s})\cap\Delta_{\mathrm{re}},\ \ \lie{s}_{\alpha}:=\lie{g}_\alpha\cap \lie{s},\ \alpha\in\Delta(\lie s)$$ to be the set of roots of $\lie s$ with respect to $\lie h$.  We call $\lie s$ \textit{symmetric} if $\Delta(\lie{s})=-\Delta(\lie{s})$.
\end{defn}
Given a subset $\Psi\subseteq\Delta,$ we define $\lie g(\Psi)$ to be the subalgebra of $\lie g$ generated by $x_{\pm\alpha},\ \alpha\in\Psi,$ where $x_{\pm\alpha}\in\lie g_{\pm\alpha}$ is any non-zero root vector. Note that $\lie g(\Psi)$ is always a regular subalgebra of $\lie g.$ A regular subalgebra $\lie s$ is called a \textit{root generated subalgebra} if $\lie s=\lie g(\Psi)$ for some $\Psi\subseteq \Delta_{\mathrm{re}}$ with $\Psi=-\Psi.$ The roots of regular subalgebras satisfy certain combinatorial properties that we encode in the following definition.


\begin{defn}\label{keydefn}
A non-empty subset $\Psi\subseteq \Delta$ is called 
\begin{itemize}

    
    \item \textit{ a subroot system} if $s_{\alpha}(\beta)\in \Psi$ for all $\alpha\in \Psi\cap \Delta^{\mathrm{re}}$ and $\beta\in \Psi$.
    
    \vspace{0,2cm}
    

       
    \item \textit{real closed} or closed if $\Psi\subseteq \Delta^{\mathrm{re}}$ and the condition holds
    $$\alpha+\beta\in \Delta^{\mathrm{re}},\ \alpha,\beta\in \Psi \implies \alpha+\beta\in \Psi.$$
    
    \item \textit{closed subroot system} if it is real closed and a subroot system.
\end{itemize}
\end{defn}

When $\lie g$ is finite dimensional, Dynkin \cite{dynkin1952semisimple} proved that semisimple subalgebras of $\lie g$ are in one to one correspondence to the closed subroot systems of $\Delta.$ Moreover, it is well known that there is a one to one correspondence between closed subsets of $\Delta$ and the regular subalgebras of $\lie g$ (see \cite[Proposition 4.1]{DdeG21} for the precise statement). Almost $70$ years later, Roy et al. proved that similar result to Dynkin holds when $\lie g$ is of affine type in \cite{roy2019maximal}. Very recently, the authors extended the results for all symmetrizable Kac-Moody algebras in \cite{habib2023root}.

In a first step, it is natural to attack the problem of understanding the structure of regular subalgebras in a purely combinatorial way. The classification of subsets defined above is quite challenging and wide open for Kac-Moody algebras. In particular cases, for example when $\Delta$ is a finite root system, $\Psi$ is a maximal real closed subroot system or a symmetric real closed subset of the roots of an (extended) affine Lie algebra, classifications are given in the literature (see for example \cite{BHV23a,DdeG21,KV21a,roy2019maximal} and references therein).
For future reference, we end this subsection with the following Lemma which can be found in \cite{habib2023root}.
\begin{lem}\label{sumnotreal}
    If $a_{ij}\le -2$ for all $i\neq j,$ then $\alpha+\beta\notin \Delta^{\mathrm{re}}$ for any $\alpha,\beta\in \Delta^{\mathrm{re}}.$ 
    In particular, any subset $\Psi\subseteq \Delta^{\mathrm{re}}$ is real closed. 
    \end{lem}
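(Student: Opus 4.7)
The plan is to argue by contradiction. Suppose $\alpha, \beta \in \Delta^{\mathrm{re}}$ and $\gamma := \alpha + \beta \in \Delta^{\mathrm{re}}$.

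First I would reduce to the case where $\alpha$, $\beta$, and $\gamma$ are all positive real roots. Each real root has simple-root coefficients of a single sign, so $\alpha,\beta,\gamma$ each has a well-defined sign. If all three are negative, negating yields the all-positive case. If the signs are mixed (say $\alpha > 0$, $\beta < 0$), then exactly one of $\beta, \gamma$ has the same sign as $\alpha$; after possibly swapping and negating (using for instance that $\alpha = \gamma + (-\beta)$ when $\gamma > 0$), one reduces to a triple of positive real roots whose sum is a positive real root.

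Next I would exploit the symmetric bilinear form. Fix the standard symmetrization $(\alpha_i, \alpha_j) = d_i a_{ij}$ with $d_i > 0$. The hypothesis $a_{ij} \le -2$ for $i \ne j$, together with $d_i a_{ij} = d_j a_{ji}$, yields
\[
(\alpha_i, \alpha_j) \;\le\; -d_i - d_j \;=\; -\tfrac{1}{2}\bigl((\alpha_i, \alpha_i) + (\alpha_j, \alpha_j)\bigr), \qquad i \ne j.
\]
Writing $\alpha = \sum k_i \alpha_i$ and $\beta = \sum \ell_i \alpha_i$ with $k_i, \ell_i \in \mathbb{Z}_{\ge 0}$, this bounds the cross terms in $(\gamma, \gamma) = (\alpha, \alpha) + 2(\alpha, \beta) + (\beta, \beta)$ from above.

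The main obstacle is that a purely norm-based argument does not suffice: one can have $(\gamma, \gamma) > 0$ with $\gamma$ nevertheless failing to be real. For instance in $A_1^{(1)}$, the choice $\alpha = 2\alpha_1 + \alpha_2$, $\beta = \alpha_1$ yields $\gamma = 3\alpha_1 + \alpha_2$ with $(\gamma, \gamma) = 8 > 0$, yet $\gamma$ is not real. The residual obstruction is the divisibility condition in Lemma \ref{keylem}: a real root $\gamma = \sum m_i \alpha_i$ must satisfy $m_j (\alpha_j, \alpha_j) / (\gamma, \gamma) \in \mathbb{Z}$ for every $j$. Combining the divisibility conditions already satisfied by $\alpha$ and $\beta$ with the hypothetical one for $\gamma$, and splitting into the finitely many cases determined by which value $(\alpha,\alpha), (\beta,\beta), (\gamma,\gamma)$ take among the simple-root norms $\{2d_i\}$ (equivalently, by the Weyl orbits of $\alpha,\beta,\gamma$), one forces enough arithmetic constraints on the coefficients to derive a contradiction. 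In the rank-two setting that is the focus of the paper this classification reduces to the affine case $ab = 4$ and the hyperbolic case $ab > 4$, both of which yield to this case analysis. The second assertion of the lemma is then immediate: if no sum of two real roots is real, the defining implication for real closedness is vacuous, so every $\Psi \subseteq \Delta^{\mathrm{re}}$ is real closed.
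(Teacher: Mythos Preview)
The paper does not actually prove this lemma; it simply records it with a citation to \cite{habib2023root}. So there is no in-paper argument to compare against, and your proposal must stand on its own.

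There are two genuine gaps. First, the lemma is stated for an arbitrary symmetrizable GCM satisfying $a_{ij}\le -2$ for all $i\ne j$, not just in rank two. Your appeal to Lemma~\ref{keylem} is only valid for finite, affine, or hyperbolic type; once the rank exceeds two, the hypothesis $a_{ij}\le -2$ typically forces the GCM to be indefinite but \emph{not} hyperbolic (e.g.\ in rank three with all off-diagonal entries equal to $-3$, every proper principal submatrix is already hyperbolic), and then the divisibility characterisation of real roots you invoke is unavailable. Restricting to rank two because that is the paper's focus does not prove the lemma as stated.

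Second, and more seriously, even after restricting to rank two your argument stops at the point where the work begins. You correctly observe that a norm bound alone cannot rule out $\alpha+\beta\in\Delta^{\mathrm{re}}$, and you correctly identify the divisibility condition in Lemma~\ref{keylem} as the remaining obstruction. But the sentence ``splitting into the finitely many cases\dots one forces enough arithmetic constraints on the coefficients to derive a contradiction'' is an assertion, not a proof. The entire content of the lemma lies in that case analysis, and you have not carried it out. Your $A_1^{(1)}$ example already shows the situation is delicate: one must genuinely check, for each possible pair of norms $(\alpha,\alpha),(\beta,\beta)\in\{2,2a/b\}$, that the integrality conditions for $\alpha$, $\beta$, and $\alpha+\beta$ are mutually incompatible, and you have not done this.

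The reduction to positive roots in your first paragraph is fine, and the inequality $(\alpha_i,\alpha_j)\le -\tfrac12\big((\alpha_i,\alpha_i)+(\alpha_j,\alpha_j)\big)$ is correct and potentially useful, but neither ingredient is brought to a conclusion.
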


\subsection{Rank \texorpdfstring{$2$}{2} Kac-Moody algebras} Let $A=\begin{psmallmatrix}
    2& -a\\ -b & 2
\end{psmallmatrix},\ a\geq b$ is a GCM of order $2.$ The matrix 
$A$ is of finite (resp. affine) type if and only if $\det(A)>0$ (resp. $\det(A)=0$) and hyperbolic type otherwise. The positive real roots of $\lie g=\lie g(A)$ are given by \cite{kac1990infinite}
\begin{equation}\label{realrootsab}
    \Delta^{re}_+=\{c_j\alpha_1+d_{j+1}\alpha_2,\ c_{j+1}\alpha_1+d_j\alpha_2:j\in\bz_+\}.
\end{equation}

where $c_j,d_j$ are non-negative integers given by the recurrence relations:
\begin{equation}\label{recurrenceab}
  c_{k+2}+c_k=ad_{k+1},\ \ d_{k+2}+d_k=bc_{k+1},\ \ c_0=d_0=0,\ c_1=d_1=1.
\end{equation}
The next Lemma is easy to prove by induction. 
\begin{lem}\label{lemrelcd}
    For all $j\in \bz_+$ we have $c_{2j+1}=d_{2j+1}$ and $c_{2j}=\frac{a}{b}d_{2j}.$
\end{lem}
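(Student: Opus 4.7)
The plan is to prove both identities simultaneously by induction on $j$. The base case $j=0$ is immediate from the initial conditions: $c_0=d_0=0$ makes the identity $c_0=\tfrac{a}{b}d_0$ hold trivially, and $c_1=d_1=1$ gives the odd-index identity for $j=0$.

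For the inductive step, I would assume that $c_{2j-1}=d_{2j-1}$, $c_{2j+1}=d_{2j+1}$, and $c_{2j}=\tfrac{a}{b}d_{2j}$, and aim to deduce the corresponding identities for indices $2j+2$ and $2j+3$. The natural thing is to apply the recurrences \eqref{recurrenceab} with $k=2j$ to obtain $c_{2j+2}=a d_{2j+1}-c_{2j}$ and $d_{2j+2}=bc_{2j+1}-d_{2j}$. Substituting the inductive hypotheses $c_{2j+1}=d_{2j+1}$ and $c_{2j}=\tfrac{a}{b}d_{2j}$ into the first equation and factoring out $\tfrac{a}{b}$ should yield exactly $\tfrac{a}{b}$ times the expression for $d_{2j+2}$, giving $c_{2j+2}=\tfrac{a}{b}d_{2j+2}$.

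To handle the odd case, I would apply the recurrences with $k=2j+1$: $c_{2j+3}=ad_{2j+2}-c_{2j+1}$ and $d_{2j+3}=bc_{2j+2}-d_{2j+1}$. Using the just-proved even-index relation $c_{2j+2}=\tfrac{a}{b}d_{2j+2}$ in the second equation collapses the factor of $b$ and, combined with $c_{2j+1}=d_{2j+1}$, immediately gives $d_{2j+3}=ad_{2j+2}-c_{2j+1}=c_{2j+3}$.

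There is essentially no obstacle; the only mild subtlety is setting up the induction so that both identities are proved together and making sure the recurrences are indexed correctly (i.e. using $k=2j$ to step from level $2j$ to level $2j+2$, and $k=2j+1$ to step from $2j+1$ to $2j+3$). The division by $b$ is harmless since $b\ge 1$ for any GCM of rank $2$, so the relation $c_{2j}=\tfrac{a}{b}d_{2j}$ is well-defined; the argument could alternatively be phrased as $b\,c_{2j}=a\,d_{2j}$ to avoid fractions entirely.
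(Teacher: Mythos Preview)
Your proof is correct and matches the paper's approach: the paper simply states that the lemma is ``easy to prove by induction'' without giving details, and your simultaneous induction on $j$ using the recurrences \eqref{recurrenceab} is exactly the intended argument. One cosmetic remark: the hypothesis $c_{2j-1}=d_{2j-1}$ in your inductive step is never used (and is undefined when $j=0$), so you can safely drop it.
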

\noindent The symmetric invariant bilinear form on $\lie h^*$ is given by 
$$(\alpha_1,\alpha_1)=2,\ \ \ (\alpha_2,\alpha_2)=\frac{2a}{b},\ \ \ (\alpha_1,\alpha_2)=-a.$$
For convenience, we define 
\begin{equation}\label{realrootstypes}
    \beta_{1}^{j}=c_j\alpha_1+d_{j+1}\alpha_2,\ \ \ \beta_{2}^{j}=c_{j+1}\alpha_1+d_{j}\alpha_2,\ \ \ j\in\bz_+.
\end{equation} We shall call a root of the form $\beta_{1}^{j}$ (resp. $\beta_{2}^{j}$) a root of type $I$ (resp. type $II$).\medskip

\textit{Throughout this article for a rank $2$ GCM $A=\begin{psmallmatrix}2& -a\\ -b & 2\end{psmallmatrix},$ we shall assume that $a\geq b.$}

\subsection{\texorpdfstring{$\pi$}{pi}-systems and embedding problem}\label{secpi}

In this subsection we shall recall the importance of $\pi$-systems in the embedding problem for Kac-Moody Lie algebras. We begin with the following definition.
\begin{defn}
    A subset $\Sigma\subseteq \Delta_{\mathrm{re}}$ is called a $\pi$-system of $\Delta$ (or simply a $\pi$-system) if $\alpha-\beta\notin \Delta$ for all $\alpha,\beta\in \Sigma.$
\end{defn}
\begin{example}
    The set of simple roots of a Kac-Moody algebra is a $\pi$-system. If $\lie g$ is a finite dimensional simple Lie algebra and $\Sigma_0$ is a set of simple roots of $\lie g,$ then $\Sigma:=\Sigma_0\cup\{-\theta\}$ is a $\pi$-system, where $\theta$ is the highest root of $\lie g$ with respect to $\Sigma_0.$ 
\end{example}

We emphasize that there is no uniform definition of a $\pi$-system in the literature. In \cite{dynkin1952semisimple} a $\pi$-system is required to be linearly independent and in \cite{feingold2004subalgebras} a $\pi$-system does not need to be linearly independent, however should contain only positive roots. The authors of \cite{carbone2021varvec} drop both restrictions. 

Given a finite $\pi$-system $\Sigma=\{\beta_1,\beta_2,\cdots,\beta_\ell\},$ we can define a matrix $B_\Sigma=(b_{ij})$ where $b_{ij}=\langle \beta_j,\beta_i^\vee \rangle.$ It  is well known that $B_\Sigma$ is a GCM (see e.g. \cite{carbone2021varvec}). In the second example above, the matrix $B_\Sigma$ is the matrix of the untwisted affine Lie algebra associated to $\lie g$. The following Lemma shows how $\pi$-systems naturally arise in the embedding problem for Kac-Moody algebras. The proof is easy and we supply the details for completeness.
\begin{lem}
    Let $A$ and $B$ be GCMs of order $n$ and $m$ respectively. Assume that there is an injective Lie algebra homomorphism $\iota:\lie g(B)\hookrightarrow \lie g(A)$ such that the $\iota(e_i^\pm(B))$ are real root vectors in $\lie g(A)$ for each $i=1,2,\cdots,m.$ Then $B=B_\Sigma$ for a unique $\pi$-system $\Sigma\subseteq \Delta^+_{\mathrm{re}}(A).$
\end{lem}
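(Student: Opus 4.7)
The plan is to read the $\pi$-system directly off how $\iota$ sends the Chevalley generators of $\lie g(B)$. For each $i \in \{1,\ldots,m\}$, since $\iota(e_i^+(B))$ is a nonzero real root vector in $\lie g(A)$, there is a unique real root $\beta_i \in \Delta_{\mathrm{re}}(A)$ with $\iota(e_i^+(B)) \in \lie g_{\beta_i}(A)$; set $\Sigma := \{\beta_1,\ldots,\beta_m\}$. Uniqueness of $\Sigma$ given $\iota$ is immediate, since root spaces for distinct roots intersect trivially. If some $\beta_i$ turn out to be negative, one composes $\iota$ with the Chevalley involution of $\lie g(B)$ (handling each connected component of the Dynkin diagram of $B$ separately) to push $\Sigma$ into $\Delta^+_{\mathrm{re}}(A)$; this preserves both $B$ and the hypotheses of the lemma.

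Next I would match the Cartan data and recover $B = B_\Sigma$. Let $\beta_i'$ be the root of $\iota(e_i^-(B))$. Then $\iota(\alpha_i^\vee) = [\iota(e_i^+),\iota(e_i^-)] \in \lie g_{\beta_i+\beta_i'}(A)$ is nonzero by injectivity of $\iota$, and the relation $[\iota(\alpha_i^\vee),\iota(e_i^+)] = 2\iota(e_i^+)$ forces $\beta_i' = -\beta_i$ by comparing root gradings, so $\iota(\alpha_i^\vee) \in \lie h(A)$. One-dimensionality of the real root spaces $\lie g_{\pm\beta_i}(A)$ makes $[\iota(e_i^+),\iota(e_i^-)]$ a scalar multiple of $\beta_i^\vee$, and that scalar is pinned to $1$ by $\langle \beta_i,\iota(\alpha_i^\vee)\rangle = 2$. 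Pushing $[\alpha_i^\vee, e_j^+] = b_{ij} e_j^+$ through $\iota$ then yields $b_{ij} = \langle \beta_j, \beta_i^\vee\rangle$, i.e., exactly $(B_\Sigma)_{ij}$.

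The main step, and the only non-formal one, is the $\pi$-system property $\beta_i - \beta_j \notin \Delta(A)$ for $i \neq j$. I would argue by contradiction using integrability of the $\mathfrak{sl}_2$-triple attached to $\beta_j$. The Serre relation $[e_i^+, e_j^-] = 0$ pushes forward to $[\iota(e_i^+),\iota(e_j^-)] = 0$, so $\iota(e_i^+)$ is a lowest-weight vector of its irreducible component under $\mathfrak{sl}_2^{\beta_j} := \langle \iota(e_j^+), \iota(e_j^-), \beta_j^\vee\rangle$ and hence does not lie in the image of $\mathrm{ad}(\iota(e_j^+))$. Now if $\beta_i - \beta_j \in \Delta(A)$, any nonzero $v \in \lie g_{\beta_i - \beta_j}(A)$ has $\beta_j^\vee$-weight $b_{ji} - 2 \leq -2$ (using $b_{ji} \leq 0$ from the GCM condition), so integrability forbids $v$ from being annihilated by $\mathrm{ad}(\iota(e_j^+))$; thus this operator maps $\lie g_{\beta_i - \beta_j}(A)$ injectively into the one-dimensional $\lie g_{\beta_i}(A)$, and must therefore hit the spanning vector $\iota(e_i^+)$, a direct contradiction. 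This integrability-plus-one-dimensionality argument is where the work sits; everything else is bookkeeping with the Kac-Moody presentation under $\iota$.
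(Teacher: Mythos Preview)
Your direct computation that $\iota(\alpha_i^\vee)=\beta_i^\vee$ and hence $B=B_\Sigma$, together with the $\mathfrak{sl}_2$-integrability argument for the $\pi$-system property, are correct and more hands-on than the paper's route, which simply observes that the image $\lie s=\iota(\lie g(B))$ equals $\lie g(A)(\Psi)$ for the real closed subroot system $\Psi$ generated by the $\beta_i$ and then invokes the bijection of Theorem~\ref{mainresbij} (i.e.\ \cite[Theorem~1]{habib2023root}) to produce the positive $\pi$-system $\Pi(\Psi)$.

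However, your positivity step has a genuine gap. Applying the Chevalley involution of $\lie g(B)$ component by component flips the signs of \emph{all} the $\beta_i$ in a given connected component of the Dynkin diagram of $B$ simultaneously, so it works only if those signs are already constant on each component. They need not be: take $A=B=\begin{psmallmatrix}2&-2\\-2&2\end{psmallmatrix}$ and the automorphism $\iota=\omega\circ n_{\alpha_1}$ of $\lie g(A)\cong\wh{\mathfrak{sl}}_2$ (Chevalley involution composed with the Tits lift of $s_{\alpha_1}$). Then $\iota(e_1^+)\in\lie g_{\alpha_1}$ while $\iota(e_2^+)\in\lie g_{-\alpha_1-\delta}$; the resulting $\Sigma=\{\alpha_1,-\alpha_1-\delta\}$ is a $\pi$-system (since $2\alpha_1+\delta\notin\Delta$) whose Cartan matrix is connected, yet the signs are mixed, and no component-wise Chevalley involution can repair this. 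The correct replacement is Weyl-group conjugation: since your $\Sigma$ is linearly independent (injectivity of $\iota$ on $\lie h(B)$ forces the $\beta_i^\vee$, hence the $\beta_i$, to be independent), \cite[Theorem~3.1]{carbone2021varvec} supplies $w\in W(A)$ with $\pm w\Sigma\subseteq\Delta^+_{\mathrm{re}}$, and $B_{\pm w\Sigma}=B_\Sigma=B$. Alternatively one passes through the closed subroot system $\Psi$ as the paper does; this is also what pins down the uniqueness the lemma intends (unique as the $\Pi(\Psi)$ attached to the image subalgebra via Theorem~\ref{mainresbij}), which is a stronger statement than your ``unique given $\iota$''.
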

\begin{proof}
    Let $\iota(\lie g(B))=\lie s$ and let $\{\beta_1,\beta_2,\cdots,\beta_m\}\subseteq \Delta_{\mathrm{re}}$ be such that $\iota(e_i^+(B))\in \lie g(A)_{\beta_i},\ i=1,2,\cdots,m.$  It is clear that $\lie s=\lie g(A)(\Psi),$ where $\Psi$ is the real closed subroot system of $\Delta(A)$ generated by $\{\beta_i:i=1,2,\cdots,m\}$. The proof is now completed by \cite[Theorem 1]{habib2023root} (the next Theorem).
\end{proof}

The following Theorem extends the results in \cite{dynkin1952semisimple,roy2019maximal} and serves as a bridge between the algebraic and combinatorial side for symmetrizable Kac-Moody algebras. 

\begin{thm}\label{mainresbij}\cite{habib2023root}
     Let $\lie g$ be a symmetrizable Kac-Moody algebra.
     We have the following bijections \vspace{10pt}
    $$\begin{array}{cccccr}
     \vspace{10pt}\left\{\begin{array}{c}
            \pi-\text{systems of }\Delta  \\
             \text{contained in } \Delta^+_{\mathrm{re}}
       \end{array}\right\}  & \longleftrightarrow & \left\{\begin{array}{c}
            \text{real closed subroot}  \\
             \text{systems of } \Delta
       \end{array}\right\}& \longleftrightarrow & \left\{\begin{array}{c}
            \text{root generated}\\
             \text{subalgebras of $\lie g$} 
       \end{array}\right\} \\ \vspace{10pt}
        \Sigma  & \longmapsto & W_\Sigma(\Sigma)\\
    \Pi(\Psi) & \longmapsfrom &\Psi & \longmapsto& \lie g(\Psi)&\\ \\
    &&\Delta(\lie g(S))_{\mathrm{re}}& \longmapsfrom &\lie g(S)&
    \end{array}$$
    \end{thm}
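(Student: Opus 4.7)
The plan is to prove the three-way correspondence by establishing two separate bijections: first between $\pi$-systems contained in $\Delta^+_{\mathrm{re}}$ and real closed subroot systems of $\Delta$, and then between real closed subroot systems and root generated subalgebras of $\lie g$.

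On the combinatorial side, given a real closed subroot system $\Psi$, I would define $\Pi(\Psi)$ to be the set of positive elements of $\Psi$ that cannot be written as a sum of two other positive elements of $\Psi$. The first task is to verify that $\Pi(\Psi)$ is a $\pi$-system: if $\alpha,\beta\in\Pi(\Psi)$ and $\alpha-\beta\in\Delta$, after swapping assume $\alpha-\beta\in\Delta^+$, and one must argue via the subroot-system axiom together with real-closedness of $\Psi$ that $\alpha-\beta\in\Psi$, yielding the forbidden decomposition $\alpha=(\alpha-\beta)+\beta$. The core of this step is an analysis of the $\beta$-string through $\alpha$, showing it lies inside $\Psi$. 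Conversely, $W_\Sigma(\Sigma)$ is tautologically a subroot system; to show real closedness I would construct the GCM $B_\Sigma$ from $\Sigma=\{\beta_1,\ldots,\beta_\ell\}$, embed the formal real roots of the associated Kac-Moody root system into $\Delta$ via $\sum k_i\beta_i \mapsto \sum k_i\beta_i$, and verify that this map identifies abstract real roots with $W_\Sigma(\Sigma)$. The round trips $\Pi(W_\Sigma(\Sigma))=\Sigma$ and $W_{\Pi(\Psi)}(\Pi(\Psi))=\Psi$ then follow: indecomposability in $W_\Sigma\cdot\Sigma$ forces the simple generators by a coefficient-positivity argument, while every real root of $\Psi$ is a $W_\Psi$-conjugate of a simple one.

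On the algebraic side, for a real closed subroot system $\Psi$ with $\Sigma:=\Pi(\Psi)$, let $\lie g'(B_\Sigma)$ be the derived Kac-Moody algebra presented by the Chevalley-Serre relations for $B_\Sigma$. The universal property produces a Lie algebra map $\varphi:\lie g'(B_\Sigma)\to\lie g$ sending Chevalley generators to chosen nonzero root vectors $x_{\pm\beta_i}$, and the image is precisely $\lie g(\Psi)$. The inclusion $\Psi\subseteq \Delta(\lie g(\Psi))_{\mathrm{re}}$ is immediate. For the reverse inclusion, I would argue that $\ker\varphi$ is central in $\lie g'(B_\Sigma)$ by a weight-space decomposition: if an element $x$ of nonzero weight lay in the kernel, then adjoint actions of the generators would exhibit contradictions with the fact that the images span genuine nonzero root spaces of $\lie g$. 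Centrality of the kernel implies that $\varphi$ induces isomorphisms on every nonzero weight space, so the real roots of $\lie g(\Psi)$ coincide with the real roots of $\lie g'(B_\Sigma)$, which are exactly $W_\Sigma\cdot\Sigma=\Psi$ by the combinatorial bijection already established. The inverse map $\lie s\mapsto \Delta(\lie s)_{\mathrm{re}}$ is then an inverse by construction.

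The main obstacle is establishing centrality of $\ker\varphi$ when $\Sigma=\Pi(\Psi)$ is infinite or linearly dependent. In the classical linearly independent case, $\varphi$ is automatically injective by standard Kac-Moody embedding theorems (the Naito/Carbone-type result quoted earlier), so no obstruction arises. However, as noted in \cite{Henn06Geometric}, dropping linear independence introduces genuine extra relations beyond Serre's, and in the infinite case one must work with a direct-limit construction of $\lie g'(B_\Sigma)$. Controlling these additional relations, and showing they are supported entirely on weight zero, is the technical heart of the argument and would proceed by weight-space analysis combined with the rigidity of $\lie g$ as a symmetrizable Kac-Moody algebra. Once this step is secured, the two bijections assemble into the claimed three-way correspondence.
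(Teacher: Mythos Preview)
The paper does not prove this theorem; it is quoted verbatim from \cite{habib2023root} and used as a black box. There is therefore no proof in the present paper to compare your proposal against.

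That said, one point in your sketch deserves attention. In showing that $\Pi(\Psi)$ is a $\pi$-system, you assume $\alpha-\beta\in\Delta^+$ and aim to force $\alpha-\beta\in\Psi$ via real-closedness and a $\beta$-string argument. But $\alpha-\beta$ may well be an \emph{imaginary} root (indeed, Proposition~\ref{keyproppos} of this very paper shows that differences of positive real roots of the same type are typically imaginary). In that case $\alpha-\beta\notin\Psi\subseteq\Delta_{\mathrm{re}}$, so you cannot obtain the forbidden decomposition $\alpha=(\alpha-\beta)+\beta$ inside $\Psi$, and real-closedness says nothing. Your string argument as stated (``showing it lies inside $\Psi$'') cannot work literally, since the string contains imaginary elements. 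The actual argument in \cite{habib2023root} must handle this case by a different mechanism---for instance, by using that $s_\beta(\alpha)\in\Psi$ and analyzing the sign and height of $s_\beta(\alpha)$ to produce a genuine decomposition of $\alpha$ or $\beta$ into two positive elements of $\Psi$. You have correctly identified the centrality-of-kernel step as the technical heart on the algebraic side, but the combinatorial side also has a nontrivial case you have glossed over.
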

    
    In the same article, the authors proved that for a closed subroot system $\Psi$ of $\Delta,$ we have $\lie g(\Psi)=\lie g(\Pi(\Psi)).$ It is natural to ask what kind of structure does the subalgebra $\lie g(\Psi)$ posses? It has been proved in \cite{naito1992regular} (see also \cite{carbone2021varvec}) that if $\Pi(\Psi)$ is linearly independent, then $\lie g(\Psi)$ is indeed isomorphic to the derived algebra of some Kac-Moody algebra. There are examples where $\Pi(\Psi)$ is neither finite nor linearly independent. In this article, we shall prove that for a rank $2$ Kac-Moody algebra $\lie g,$ any $\pi$-system $\Sigma\subseteq \Delta^+_{\mathrm{re}}$ is linearly independent. The following Lemma holds for any rank and for completeness, we supply the details. 
\begin{lem}\label{finpisystem}
    If $\lie g$ is of finite type, then any $\pi$-system $\Sigma\subseteq \Delta^+$ of $\lie g$ is linearly independent.
\end{lem}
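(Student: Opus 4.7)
The plan is to combine two classical facts about finite root systems: positivity of the invariant form, and the standard root-system identity that if two distinct roots $\alpha,\beta$ satisfy $(\alpha,\beta)>0$, then $\alpha-\beta$ is again a root.

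First, I would observe that in finite type the symmetric invariant bilinear form $(\cdot,\cdot)$ is positive definite on the real span of the roots. Now take $\alpha,\beta\in\Sigma$ with $\alpha\neq\beta$. Both are positive roots, so $\alpha\neq -\beta$ as well. If we had $(\alpha,\beta)>0$, the standard finite-type fact just cited would force $\alpha-\beta\in\Delta$, contradicting the $\pi$-system condition. Hence $(\alpha,\beta)\le 0$ for every pair of distinct elements of $\Sigma$.

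Next, pick any element $\rho$ in the interior of the fundamental Weyl chamber (for instance the half sum of positive roots, which satisfies $(\rho,\alpha_i)=(\alpha_i,\alpha_i)/2>0$ for each simple root). Since every element of $\Sigma$ lies in the non-negative cone generated by the simple roots and is nonzero, we have $(\rho,\alpha)>0$ for all $\alpha\in\Sigma$. So $\Sigma$ is a finite set of vectors lying in an open half-space and having pairwise non-positive inner products in a positive definite inner product space.

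I would then invoke the standard \emph{obtuse vectors} lemma: suppose $\sum_{\alpha\in\Sigma}c_\alpha\,\alpha=0$ with real coefficients, and split $\Sigma=\Sigma_+\sqcup\Sigma_-\sqcup\Sigma_0$ according to the sign of $c_\alpha$. Setting $v:=\sum_{\alpha\in\Sigma_+}c_\alpha\alpha=\sum_{\beta\in\Sigma_-}(-c_\beta)\beta$, one computes
\[
(v,v)=\sum_{\alpha\in\Sigma_+,\,\beta\in\Sigma_-} c_\alpha(-c_\beta)(\alpha,\beta)\le 0,
\]
because every coefficient is non-negative while each $(\alpha,\beta)\le 0$. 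Positive definiteness forces $v=0$, and then pairing with $\rho$ gives $\sum_{\alpha\in\Sigma_+}c_\alpha(\rho,\alpha)=0$, which together with $(\rho,\alpha)>0$ implies $\Sigma_+=\emptyset$; symmetrically $\Sigma_-=\emptyset$. Hence all $c_\alpha$ vanish and $\Sigma$ is linearly independent.

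There is no real obstacle here; the only points to be careful about are checking that $\alpha\neq-\beta$ (automatic since both are positive) so that the $\alpha-\beta\in\Delta$ implication applies, and that a valid $\rho$ separating $\Sigma$ from the origin exists, which follows from $\Sigma\subseteq\Delta^+$.
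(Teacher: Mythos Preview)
Your argument is correct and is essentially the same as the paper's: both split a putative dependence relation into its positive and negative parts, set $v$ equal to the common value of the two resulting non-negative combinations, compute $(v,v)=\sum t_\alpha s_\beta(\alpha,\beta)\le 0$, and conclude $v=0$ by positive definiteness. You are simply more explicit than the paper on two points it leaves tacit: you spell out why $(\alpha,\beta)\le 0$ for distinct $\alpha,\beta\in\Sigma$ (via the standard fact that $(\alpha,\beta)>0$ forces $\alpha-\beta\in\Delta$), and you justify the final implication $v=0\Rightarrow$ all coefficients vanish by pairing with $\rho$, whereas the paper just asserts it from positivity of the roots.
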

\begin{proof}
    Let $\sum_{\alpha\in \Sigma} c_\alpha \alpha=0.$ If $\Sigma$ is not linearly independent, then there exists $\alpha_1,\alpha_2\in \Sigma$ such that $c_{\alpha_1}>0$ and $c_{\alpha_2}<0.$ Separating the indices for which $c_\alpha>0$ from those for which $c_\alpha<0,$ we can rewrite this as $\sum t_\alpha \alpha=\sum s_\beta \beta$ where $t_\alpha,s_\beta$ are all non-negative, each  $\alpha,\beta$ occurring in the above expression are positive roots and the set of $\alpha'$s and $\beta'$s are disjoint. Set $\gamma=\sum t_\alpha \alpha.$ Then $(\gamma,\gamma)=\sum_{\alpha,\beta} t_\alpha s_\beta(\alpha,\beta)\le 0$ and thus $\gamma=0.$ In particular, we have $t_\alpha=s_\beta=0$ for all $\alpha,\beta.$
\end{proof}

Although there is an explicit description of real roots of a rank $2$ Kac-Moody algebra, straightforward induction is not the correct tool to use to prove the results. Instead, we will employ the following simple yet highly effective version of the induction method for the majority of the proofs.
\begin{lem}\label{lemstar}
    Let $A$ be a subset of $\bz_+$ such that 
\begin{equation}\label{propstar}
\begin{split}
    &\ \ \text{there exists }k\in \bz_+\ \text{such that }k,k+1\in A,\\
    &\ \ j\in A\Longrightarrow j+2\in A\text{ for any }j\geq k\in \bz_+.
\end{split}\tag{$\star$}
\end{equation}
then we have $A\supseteq \{k,k+1,k+2,\cdots\}.$
\end{lem}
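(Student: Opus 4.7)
The statement is essentially a two-step induction, so the plan is short. My approach is to prove by strong induction on $n \geq k$ that $n \in A$.

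First I would handle the base cases: the hypothesis directly gives $k \in A$ and $k+1 \in A$. For the inductive step, I would fix $n \geq k+2$ and assume, as the inductive hypothesis, that every integer $m$ with $k \leq m < n$ lies in $A$. In particular $n-2 \in A$, and since $n - 2 \geq k$, the second assumption of \eqref{propstar} applies and yields $(n-2) + 2 = n \in A$. This closes the induction and shows $\{k, k+1, k+2, \ldots\} \subseteq A$.

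An equivalent and perhaps conceptually cleaner presentation is to split $\{k, k+1, k+2, \ldots\}$ into the two arithmetic progressions $\{k, k+2, k+4, \ldots\}$ and $\{k+1, k+3, k+5, \ldots\}$. A plain induction on $j \geq 0$, starting respectively from $k \in A$ and $k+1 \in A$ and applying the implication $j \in A \Rightarrow j+2 \in A$ (valid since all indices encountered are $\geq k$), shows that each progression is contained in $A$; their union is $\{k, k+1, k+2, \ldots\}$.

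There is no real obstacle here: the lemma is a combinatorial bookkeeping device tailored to the way induction will be carried out later in the paper, namely producing a pair of consecutive integers in some set and then showing the set is closed under adding $2$. The only thing to be careful about is verifying that the threshold $j \geq k$ in the hypothesis is never violated when we invoke the implication, which is automatic because we only apply it to $n - 2 \geq k$ (respectively to $k + 2j \geq k$ and $k + 1 + 2j \geq k$).
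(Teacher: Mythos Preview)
Your argument is correct; this is exactly the straightforward two-step induction the lemma encodes. The paper itself does not supply a proof for Lemma~\ref{lemstar}---it is stated without proof as an obvious bookkeeping device---so your write-up (either the strong-induction version or the two-progression version) is perfectly adequate and in the same spirit.
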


\section{Classifications of linearly independent \texorpdfstring{$\pi$}{pi}-systems}\label{lipisystems}
In this section we shall classify the linearly independent $\pi$-systems of rank $2$ Kac-Moody Lie algebras. Throughout the section we assume that $A=\begin{psmallmatrix}2 & -a \\ -b & 2\end{psmallmatrix}$ so that $a,b\in\bn$ and $a\geq b.$ Let $\lie g=\lie g(A)$ be the corresponding Kac-Moody Lie algebra.  The main result of this Section is the following Theorem.
\begin{thm}\label{Thmab}
    Let $A= \begin{psmallmatrix} 2 & -a \\-b & 2 \end{psmallmatrix}$ be a generalized Cartan matrix such that either $b\geq 2$ or $b=1$ and $a\geq 5$. Then every $\pi$-system $\Sigma\subseteq \Delta^+_{\mathrm{re}}$ of $\lie g(A)$ satisfies $|\Sigma|\le 2.$ If $|\Sigma|=2,$ then $\Sigma$ is of the form $\Sigma=\{\beta_1^j,\beta_2^k\}$ for some $j,k\in\bz_+.$ In particular, $\Sigma$ is linearly independent. Conversely,
    \begin{enumerate}
        \item if $b\geq 2,$ then $\Sigma=\{\beta_{1}^{j},\beta_{2}^{k}\}$ is a $\pi$-system for any $j,k\in\bz_+.$
        \item if $b=1$ and $a\geq 5,$ then $\Sigma=\{\beta_{1}^{j},\beta_{2}^{k}\}$ is a $\pi$-system if and only if $(j,k)\neq (1,0).$
    \end{enumerate}
\end{thm}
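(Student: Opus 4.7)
My plan is to reduce the question to understanding when $\alpha - \beta \in \Delta$ for pairs $\alpha, \beta \in \Delta^+_{\mathrm{re}}$; three cases arise according to whether both are type~I, both are type~II, or one of each. The cardinality statement $|\Sigma| \leq 2$ and the structural assertion $\Sigma = \{\beta_1^j, \beta_2^k\}$ will follow once I show that any two same-type roots have a difference lying in $\Delta$, and the converse then amounts to inspecting the mixed-type differences. The principal tool is the Weyl group: the identities $s_1(\beta_1^j) = \beta_2^{j+1}$, $s_2(\beta_2^j) = \beta_1^{j+1}$, and $s_2 s_1(\beta_1^j) = \beta_1^{j+2}$ follow immediately from~\eqref{recurrenceab}, and they reduce every same-type inner product $(\beta_1^j, \beta_1^k)$ to one of two base quantities $P(n) := (\alpha_2, \beta_1^n) = -a c_n + (2a/b) d_{n+1}$ or $Q(n) := (\alpha_1, \beta_2^n) = 2 c_{n+1} - a d_n$.

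For the same-type case, a short manipulation using~\eqref{recurrenceab} gives $P(n) - 2a/b = (a/b)(D_n - 2)$ with $D_n := d_{n+1} - d_{n-1}$, and $D_n$ itself satisfies $D_{n+2} - D_n = (ab - 4) d_{n+1} \geq 0$ with base cases $D_1 = b$ and $D_2 = ab - 2$. Under either hypothesis of the theorem a quick induction shows $D_n \geq 2$ for all $n \geq 1$, with the single exception $(b, n) = (1, 1)$; for $j \neq k$ this forces the squared norm of $\beta_1^j - \beta_1^k$ to be non-positive, so by Lemma~\ref{keylem} the difference lies in $\Delta_{\mathrm{im}}$. In the sole exceptional case one still has a root, since $\beta_1^0 - \beta_1^1 = -\alpha_1 \in \Delta$. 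A parallel argument with $E_n := c_{n+1} - c_{n-1}$ handles $\beta_2^j - \beta_2^k$. Coupled with the fact that distinct positive real roots in the reduced rank-$2$ root system are linearly independent, this yields the bound on $|\Sigma|$, the claimed form of $\Sigma$, and the linear independence assertion simultaneously.

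For the converse I analyse $\beta_1^j - \beta_2^k = (c_j - c_{k+1}) \alpha_1 + (d_{j+1} - d_k) \alpha_2$. When $b \geq 2$, Lemma~\ref{sumnotreal} forbids a real-root realisation, so only the imaginary case must be excluded; Weyl-group reductions together with the explicit formulas $V(j, 0) = -E_{j+1}$ and $V(0, k) = -(a/b) D_{k+1}$ for the mixed inner product $V(j, k) := (\beta_1^j, \beta_2^k)$ yield strict positivity of $(\beta_1^j - \beta_2^k, \beta_1^j - \beta_2^k)$, closing off the imaginary obstruction. When $b = 1$ and $a \geq 5$, Lemma~\ref{sumnotreal} fails and the difference could a priori be a real root; a case analysis (small indices inspected directly, larger ones eliminated via the exponential growth of $c_n, d_n$ forced by the hyperbolic condition $ab > 4$) shows the only real coincidence is $\beta_1^1 - \beta_2^0 = b\alpha_2 = \alpha_2$, producing exactly the exclusion $(j, k) = (1, 0)$ of part~(2). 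The main obstacle I anticipate is precisely this $b = 1, a \geq 5$ mixed-type verification: without Lemma~\ref{sumnotreal} both real and imaginary obstructions must be disentangled simultaneously, and isolating the sporadic coincidence at $(1, 0)$ from the generic behaviour is the most delicate step of the argument.
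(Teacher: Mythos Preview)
Your argument follows the same architecture as the paper's proof: same-type differences $\beta_i^j-\beta_i^k$ are shown to always lie in $\Delta$ (forcing $|\Sigma|\le 2$ and the shape $\{\beta_1^j,\beta_2^k\}$), while mixed-type differences $\beta_1^j-\beta_2^k$ are shown to have strictly positive norm and are then excluded from $\Delta$. The paper carries out the norm computations via the polynomial inequalities of Lemmas~\ref{lem1} and~\ref{lem2}, each proved by the two-step induction of Lemma~\ref{lemstar}; your repackaging through $D_n=d_{n+1}-d_{n-1}$ and $E_n=c_{n+1}-c_{n-1}$ with the telescoping identity $D_{n+2}-D_n=(ab-4)\,d_{n+1}$ is an equivalent but tidier version of exactly the same computation (the factor $ab-4$ makes the affine/hyperbolic boundary visible at a glance, and the paper's Lemma~\ref{lem1} is literally $(b/2)\|\alpha_2-\beta_1^k\|^2\le 0$). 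The one substantive divergence is the $b=1,\ a\ge 5$ mixed-type step: the paper does not perform your case analysis but instead invokes \cite[Theorem~3.6]{carbone2021commutator} to decide which of the reduced elements $\alpha_1-\beta_1^r$, $\alpha_2-\beta_2^s$ are real roots. Your self-contained route is sound, but make explicit that it is a \emph{norm} argument rather than a coefficient-growth argument: real roots have norm in $\{2,\,2a\}$, while $\|\alpha_1-\beta_1^r\|^2=2+\|\beta_1^r\|^2+2E_{r+1}$ eventually exceeds $2a$, so only the handful of small $r$ (where one finds $\alpha_1-\beta_1^1=-\alpha_2$ and nothing else) require direct inspection. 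With that clarification the sketch goes through.
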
\qed

We point out that along with \cite[Theorem 1]{habib2023root}, the statement $|\Sigma|\le 2$ for any $\pi$-system $\Sigma\subseteq\Delta^+_{\mathrm{re}}$ has been previously observed in \cite[Theorem 4.8]{carbone2021commutator}. Our aim is to classify explicitly the $\pi$-systems and use the classification to obtain all the root generated subalgebras of $\lie g$ upto isomorphism along with their associated GCM. The fact that $\Sigma$ is linearly independent holds if $A$ is of finite type by Lemma \ref{finpisystem}. Hence we can assume that $A$ is not of finite type. \medskip

\textit{Till Section \ref{clpisystems}, we shall assume either $b\geq 2$ or $b=1$ and $a\geq 5$. We postpone the remaining case $b=1$ and $a=4$ in the appendix. Note that we always have $ab\geq 4.$}\medskip

Using \cite[Theorem 3.1]{carbone2021varvec} we obtain the classification of the linearly independent $\pi$-systems as a corollary of the above Theorem.
\begin{cor}
    Let $A= \begin{psmallmatrix} 2 & -a \\-b & 2 \end{psmallmatrix}$ be a generalized Cartan matrix and $\lie g(A)$ be the corresponding Kac-Moody algebra. Let $\Sigma$ be a linearly independent $\pi$-system in $\Delta.$ Then either $|\Sigma|=1$ or there exists $w\in W$ and $\Sigma_0,$ a $\pi$-system defined by Theorem \ref{Thmab}, such that either $w\Sigma=\Sigma_0$ or $w\Sigma=-\Sigma_0.$ 
\end{cor}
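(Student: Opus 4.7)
The plan is to reduce the classification of linearly independent $\pi$-systems in the full root system $\Delta$ to the subclassification in $\Delta^+_{\mathrm{re}}$ already achieved in Theorem \ref{Thmab}. The bridge between the two is \cite[Theorem 3.1]{carbone2021varvec}, which asserts that for every linearly independent $\pi$-system $\Sigma$ of a symmetrizable Kac-Moody algebra there exists $w \in W$ such that $w\Sigma \subseteq \Delta^+_{\mathrm{re}}$ or $w\Sigma \subseteq \Delta^-_{\mathrm{re}}$.

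First I would dispose of the case $|\Sigma|=1$, which is listed as a separate alternative in the statement and requires nothing further. For $|\Sigma| \geq 2$, the first observation is that the $\pi$-system property is preserved under the Weyl group action and under global negation: if $\alpha,\beta \in \Sigma$ and $w \in W$, then $w(\alpha-\beta) \in \Delta$ if and only if $\alpha-\beta \in \Delta$, since $W$ permutes $\Delta$ and therefore also fixes the complement $Q \setminus \Delta$ setwise; and clearly $\beta-\alpha \in \Delta$ if and only if $\alpha-\beta \in \Delta$. Linear independence is also preserved by both operations. Hence I may apply \cite[Theorem 3.1]{carbone2021varvec} to produce $w \in W$ with $w\Sigma$ contained entirely in $\Delta^+_{\mathrm{re}}$ or entirely in $\Delta^-_{\mathrm{re}}$.

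In the positive case, $w\Sigma$ is a linearly independent $\pi$-system inside $\Delta^+_{\mathrm{re}}$, and Theorem \ref{Thmab} identifies it as one of the sets $\Sigma_0=\{\beta_1^j,\beta_2^k\}$ described there. In the negative case, the same argument applied to $-w\Sigma \subseteq \Delta^+_{\mathrm{re}}$ gives $-w\Sigma = \Sigma_0$, equivalently $w\Sigma = -\Sigma_0$, which is the second alternative in the statement.

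I do not expect any serious obstacle in executing this outline: all of the substantive content is concentrated either in the Weyl-conjugacy result from \cite{carbone2021varvec} or in the rank $2$ classification of $\pi$-systems inside $\Delta^+_{\mathrm{re}}$ provided by Theorem \ref{Thmab}, and the corollary is a mechanical assembly of these two inputs together with the elementary stability of the $\pi$-system property under $W$ and sign change.
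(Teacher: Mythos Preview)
Your proposal is correct and matches the paper's approach exactly: the paper introduces this corollary with the single line ``Using \cite[Theorem 3.1]{carbone2021varvec} we obtain the classification of the linearly independent $\pi$-systems as a corollary of the above Theorem,'' and you have simply spelled out the mechanical details of that reduction.
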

Theorem \ref{Thmab} also determines the structure of root generated subalgebras of $\lie g(A),$ see \cite{carbone2021varvec,habib2023root} for their importance.
\begin{cor}
    If $\Psi$ is a closed subroot system of $\Delta,$ then the root generated subalgebra $\lie g(\Psi)$ is isomorphic to the derived subalgebra of a rank $2$ Kac-Moody Lie algebra.
\end{cor}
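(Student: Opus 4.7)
The plan is to read the corollary as an immediate consequence of Theorem~\ref{mainresbij} combined with the classification Theorem~\ref{Thmab} (together with its appendix counterpart covering $b=1$, $a=4$) and the embedding theorem of Naito/Carbone et al.\ cited in the Introduction. Concretely, given a closed subroot system $\Psi\subseteq\Delta$, the bijection in Theorem~\ref{mainresbij} produces a $\pi$-system $\Sigma:=\Pi(\Psi)\subseteq \Delta_{\mathrm{re}}^+$ with the property $\lie g(\Psi)=\lie g(\Pi(\Psi))$. Thus it is enough to establish the statement for $\lie g(\Sigma)$ where $\Sigma\subseteq\Delta_{\mathrm{re}}^+$ is any $\pi$-system.

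Next, I would invoke Theorem~\ref{Thmab} (and its appendix analogue if $b=1,\ a=4$) to conclude $|\Sigma|\leq 2$ and, crucially, that $\Sigma$ is linearly independent. Writing $\Sigma=\{\beta_{i_1},\ldots,\beta_{i_\ell}\}$ with $\ell\in\{1,2\}$, form the attached GCM $B_\Sigma=(\langle \beta_{i_t},\beta_{i_s}^\vee\rangle)$ from Section~\ref{secpi}. By the theorem of Naito (quoted verbatim in the Introduction as \cite{carbone2021varvec,naito1992regular}) applied to the linearly independent $\pi$-system $\Sigma$, there is an injective Lie algebra homomorphism $\iota\colon \lie g(B_\Sigma)\hookrightarrow \lie g(A)$ sending the Chevalley generators $e_t^\pm(B_\Sigma)$ to nonzero root vectors in $\lie g(A)_{\pm\beta_{i_t}}$.

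The subalgebra of $\lie g(B_\Sigma)$ generated by the Chevalley generators $\{e_t^\pm(B_\Sigma)\}$ is exactly the derived algebra $\lie g'(B_\Sigma)$. Hence the restriction of $\iota$ to $\lie g'(B_\Sigma)$ is an injection with image the Lie subalgebra of $\lie g(A)$ generated by the chosen real root vectors $e_{\pm\beta_{i_t}}$, which is by definition $\lie g(\Sigma)$. Therefore $\lie g(\Psi)=\lie g(\Sigma)\cong \lie g'(B_\Sigma)$, and since $B_\Sigma$ has size at most $2$ this realises $\lie g(\Psi)$ as the derived subalgebra of a rank $\leq 2$ Kac--Moody Lie algebra, as claimed.

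I do not anticipate a genuine obstacle: all the heavy lifting has already been done in Theorems~\ref{mainresbij} and~\ref{Thmab}. The only point that needs a sentence of care is the identification of the image of $\iota$ with the \emph{derived} subalgebra (rather than all of $\lie g(B_\Sigma)$), which is automatic because $\lie g(\Psi)$ is generated purely by root vectors with no Cartan elements. The trivial boundary cases $|\Sigma|=0$ (excluded by the nonemptiness convention on subroot systems) and $|\Sigma|=1$ (giving $\lie g(\Psi)\cong \mathfrak{sl}_2$, which one regards as a degenerate rank $\leq 2$ instance) are simply noted in passing.
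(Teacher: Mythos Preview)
Your proposal is correct and follows the same route the paper intends: the paper does not spell out a proof, but the sentence immediately preceding the corollary already points to the ingredients---Theorem~\ref{mainresbij} gives $\lie g(\Psi)=\lie g(\Pi(\Psi))$, Theorem~\ref{Thmab} gives $|\Pi(\Psi)|\le 2$ and linear independence, and the cited result of Naito/Carbone et al.\ (recalled just before Lemma~\ref{finpisystem}) then identifies $\lie g(\Pi(\Psi))$ with the derived algebra of $\lie g(B_{\Pi(\Psi)})$. Your only addition is making the passage to the derived subalgebra explicit, which is harmless and clarifying.
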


The rest of this Section is devoted to prove Theorem \ref{Thmab}. We point out that there is another way to obtain the positive real roots of $\lie g(A)$ by applying the Weyl group elements to the simple roots $\{\alpha_1,\alpha_2\}$ and they are related by  
\begin{equation}\label{rootfourtypes}
    \begin{split}
        \beta_{1}^{2k}=(s_2s_1)^k(\alpha_2),\ \ & \ \beta_{1}^{2k+1}=(s_2s_1)^ks_2(\alpha_1),\\
        \beta_{2}^{2k}=(s_1s_2)^k(\alpha_1),\ \ & \ \beta_{2}^{2k+1}=(s_1s_2)^ks_1(\alpha_2).
    \end{split}
\end{equation}

The following Lemma can be found in \cite{anderson10KacMoody, carbone2015root}, see also \cite[Lemma 3.1,3.5]{carbone2021commutator} for more details.
\begin{lem}\label{increasing}
     The sequences $c_j,\ d_j,\ j\in\bz_+$ satisfy
     \begin{enumerate}
         \item $c_{2j}<c_{2j+1}<c_{2j+2}$ and $d_{2j}<d_{2j+1}<d_{2j+2}$ for all $j\in\bz_+$ if $b\geq 2.$
         \item $c_{2j+1}<c_{2j}<c_{2j+3}<c_{2j+2}$ for all $j\geq 1$ and $d_{2j}<d_{2j-1}<d_{2j+2}<d_{2j+1}$ for all $j\geq 2$ if $b=1$ and $a\geq 5.$
     \end{enumerate}
     Considering the values of $\{c_i,d_i:i=0,1\}$ and $d_2,$ we obtain that all following the sequences $c_{2j},c_{2j+1},d_{2j},d_{2j+1},\ j\in\bz_+$ are strictly increasing.
\end{lem}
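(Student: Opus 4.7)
The plan is to use Lemma~\ref{lemrelcd} to turn the coupled system~\eqref{recurrenceab} into manageable recurrences inside each sequence. Substituting $d_{2k+1}=c_{2k+1}$ into $c_{2k+2}+c_{2k}=a\,d_{2k+1}$ gives the adjacent-index relation
\[
c_{2k+2}=a\,c_{2k+1}-c_{2k},
\]
and substituting $d_{2k}=(b/a)\,c_{2k}$ into $c_{2k+1}+c_{2k-1}=a\,d_{2k}$ gives
\[
c_{2k+1}=b\,c_{2k}-c_{2k-1}.
\]
Parallel identities hold for $d$ with the roles of $a$ and $b$ swapped. Composing two consecutive such steps eliminates the partner sequence and produces the three-term recurrence
\[
x_{n+1}=(ab-2)\,x_n-x_{n-1}
\]
satisfied for $n\geq 1$ by each of the four subsequences $(c_{2n}),(c_{2n+1}),(d_{2n}),(d_{2n+1})$.

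For case~(1) with $b\geq 2$ (so also $a\geq b\geq 2$), I would establish $c_{2j}<c_{2j+1}<c_{2j+2}$ and $d_{2j}<d_{2j+1}<d_{2j+2}$ simultaneously by induction on $j$. The base $j=0$ is immediate from $c_0=0<1=c_1<a=c_2$ and $d_0=0<1=d_1<b=d_2$. For the inductive step, the adjacent-index relations give
\[
c_{2j+1}-c_{2j}=(b-1)\,c_{2j}-c_{2j-1},\qquad c_{2j+2}-c_{2j+1}=(a-1)\,c_{2j+1}-c_{2j},
\]
each positive as soon as the previous strict inequality is available, because $a-1,b-1\geq 1$. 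The $d$-part is handled identically with $a$ and $b$ interchanged.

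For case~(2) with $b=1$ and $a\geq 5$, the adjacent-index relation $c_{2k+1}=c_{2k}-c_{2k-1}$ directly yields $c_{2k+1}-c_{2k}=-c_{2k-1}<0$ for $k\geq 1$ and $c_{2k+3}-c_{2k+2}=-c_{2k+1}<0$, which is the outer pair in $c_{2j+1}<c_{2j}<c_{2j+3}<c_{2j+2}$. A separate, simpler induction on the three-term recurrence $c_{2k+2}=(a-2)\,c_{2k}-c_{2k-2}$ (with coefficient $a-2\geq 3$ and base $c_0=0<a=c_2$) shows that the even subsequence is strictly increasing. For the middle inequality $c_{2k}<c_{2k+3}$, I would substitute $c_{2k+3}=c_{2k+2}-c_{2k+1}$, then $c_{2k+2}=(a-2)\,c_{2k}-c_{2k-2}$ and $c_{2k+1}=c_{2k}-c_{2k-1}$, to obtain
\[
c_{2k+3}-c_{2k}=(a-4)\,c_{2k}+c_{2k-1}-c_{2k-2},
\]
which is positive because $(a-4)\,c_{2k}\geq c_{2k}>c_{2k-2}$ (using $a\geq 5$ and the even monotonicity just established) and $c_{2k-1}\geq 0$. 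The $d$-analogue is entirely parallel; it starts at $j=2$ because $d_1=d_2=1$ when $b=1$.

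Finally, strict monotonicity of all four subsequences follows: in case~(1) by chaining $c_{2j}<c_{2j+1}<c_{2j+2}$ (and the $d$-analogue); in case~(2), the even-index monotonicity was shown above, while the odd-index monotonicity $c_{2j+1}<c_{2j+3}$ follows from the interleaving $c_{2j+1}<c_{2j}<c_{2j+3}$ for $j\geq 1$ together with the base comparison $c_1=1<a-1=c_3$. The analogous $d$-claims use $d_1=1<a-1=d_3$ and $d_2=1<a-2=d_4$, which is precisely where the explicit values of $c_i,d_i$ for $i=0,1$ and of $d_2$ enter. The main subtlety is the borderline case $ab=4$ of case~(1) (that is, $a=b=2$), where the three-term recurrence degenerates to an arithmetic progression and strict monotonicity must be read off from the adjacent-index recurrences and the initial gap rather than from growth; the hypothesis $a,b\geq 2$ gives exactly the slack needed.
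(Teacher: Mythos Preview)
Your argument is correct. The adjacent-index relations $c_{2k+2}=a\,c_{2k+1}-c_{2k}$ and $c_{2k+1}=b\,c_{2k}-c_{2k-1}$ (and their $d$-analogues with $a,b$ swapped) are derived correctly from Lemma~\ref{lemrelcd} and \eqref{recurrenceab}, and the inductions in both cases go through as you describe. One small point of presentation: in case~(2) you invoke $c_{2k-1}>0$ to get $c_{2k+1}<c_{2k}$ before establishing the odd-subsequence monotonicity; this is harmless since the $c_j$ are coefficients of positive real roots and hence positive for $j\geq 1$, but it would be cleaner to say so explicitly (or to fold positivity into the even/odd inductions).

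As for comparison with the paper: the paper does not give its own proof of this lemma, but simply cites \cite{anderson10KacMoody,carbone2015root} and \cite[Lemmas~3.1,~3.5]{carbone2021commutator}. Your self-contained argument via the adjacent-index and three-term recurrences is exactly the kind of elementary verification those references carry out, so there is no meaningful methodological difference to report.
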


\subsection{} Note that from the recurrence relations \ref{recurrenceab} we obtain
\begin{equation}\label{auxeq}
    c_{k+3}=(ab-1)c_{k+1}-ad_k,\ \ \ d_{k+3}=(ab-1)d_{k+1}-bc_k.
\end{equation}
The next couple of Lemmas provide essential inequalities to prove Theorem \ref{Thmab}.
\begin{lem}\label{lem1}
    The inequalities 
    $$bc_k^2+ad_{k+1}^2-abc_kd_{k+1}+abc_{k}-2ad_{k+1}+a\le 0,\ \text{ and }$$
    $$bc_{k+1}^2+ad_k^2-abd_kc_{k+1}-2bc_{k+1}+abd_k+b\leq 0,$$ hold  
    \begin{enumerate}
        \item for all $k\geq 1$ if $b\geq 2,$
        \item for all $k\geq 2$ if $b=1$ and $a\geq 5.$
    \end{enumerate}
\end{lem}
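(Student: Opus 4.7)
The plan is to recognize the two displayed polynomials as $(b/2)$ times the values of the bilinear form at the elements $\beta_1^k-\alpha_2$ and $\beta_2^k-\alpha_1$ of the root lattice, and then to exploit the Weyl-invariance of $(\cdot,\cdot)$ together with Lemma~\ref{increasing}. Writing $\beta_1^k-\alpha_2=c_k\alpha_1+(d_{k+1}-1)\alpha_2$ and expanding with $(\alpha_1,\alpha_1)=2$, $(\alpha_2,\alpha_2)=2a/b$, $(\alpha_1,\alpha_2)=-a$ one verifies by a short direct computation that the left-hand side of the first inequality equals $\tfrac{b}{2}(\beta_1^k-\alpha_2,\beta_1^k-\alpha_2)$; the second inequality is handled identically with $\beta_2^k-\alpha_1=(c_{k+1}-1)\alpha_1+d_k\alpha_2$. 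So the lemma is equivalent to $(\beta_1^k-\alpha_2,\beta_1^k-\alpha_2)\le 0$ and $(\beta_2^k-\alpha_1,\beta_2^k-\alpha_1)\le 0$ in the stated ranges.

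Since $\beta_1^k$ is Weyl-conjugate to $\alpha_2$ (for $k$ even) or to $\alpha_1$ (for $k$ odd) by \eqref{rootfourtypes}, Weyl-invariance gives $(\beta_1^k,\beta_1^k)\in\{2a/b,\,2\}$. Expanding $(\beta_1^k-\alpha_2,\beta_1^k-\alpha_2)=(\beta_1^k,\beta_1^k)-2(\beta_1^k,\alpha_2)+(\alpha_2,\alpha_2)$ and then substituting the recurrence relation $bc_k=d_{k+1}+d_{k-1}$ (which holds for $k\ge 1$ by \eqref{recurrenceab}), the first inequality is seen to be equivalent to $d_{k+1}-d_{k-1}\ge 2$ when $k$ is even and to $a(d_{k+1}-d_{k-1}-1)\ge b$ when $k$ is odd; since $a\ge b$, both follow from the single integer estimate $d_{k+1}-d_{k-1}\ge 2$. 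The second inequality reduces symmetrically, by the same expansion applied to $\beta_2^k-\alpha_1$, to $c_{k+1}-c_{k-1}\ge 2$ for $k$ even and to $b(c_{k+1}-c_{k-1}-1)\ge a$ for $k$ odd.

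It remains to verify these integer estimates in the stated ranges. For $b\ge 2$, the interleaving $c_{2j}<c_{2j+1}<c_{2j+2}$ and $d_{2j}<d_{2j+1}<d_{2j+2}$ from Lemma~\ref{increasing} immediately produces a gap of at least $2$ between consecutive same-parity terms, and the small base cases $d_2-d_0=b\ge 2$ and $c_2-c_0=a\ge 2$ are immediate. For $b=1$ and $a\ge 5$, the finer interleaving $d_{2j}<d_{2j-1}<d_{2j+2}<d_{2j+1}$ (valid for $j\ge 2$) still yields a gap of at least $2$ between $d_{k+1}$ and $d_{k-1}$ as soon as $k\ge 3$, and the base cases $d_3-d_1=ab-2\ge 3$ and $d_4-d_2=ab-3\ge 2$ at $k=2$ are covered by $a\ge 5$; analogous reasoning handles the $c$-estimate. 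The only odd-$k$ case for which the bound must exceed $2$ is the condition $c_{k+1}-c_{k-1}\ge 1+a/b$ that arises from the second inequality when $a>b$; here one uses Lemma~\ref{lemrelcd} to rewrite $c_{2j+2}-c_{2j}=(a/b)(d_{2j+2}-d_{2j})$ and then bounds this by $2a/b\ge 1+a/b$ using $a\ge b$.

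The overall strategy therefore replaces a head-on induction on the explicit recurrence by the single geometric observation $(\beta_1^k-\alpha_2,\beta_1^k-\alpha_2)\le 0$ (and its companion for $\beta_2^k-\alpha_1$), after which the only real work is parity and base-case bookkeeping powered by Lemmas~\ref{increasing} and~\ref{lemrelcd}. I expect the main obstacle to be not any single inequality but the care needed in enumerating all parity subcases and verifying each of the small-$k$ base cases under the two regimes $b\ge 2$ and $b=1,\,a\ge 5$; no additional combinatorial tool beyond what is already assembled in the section appears to be required.
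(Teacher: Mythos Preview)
Your proof is correct and takes a genuinely different route from the paper's. The paper proves Lemma~\ref{lem1} by a direct two-step induction in the spirit of Lemma~\ref{lemstar}: it checks $k=2,3$ explicitly, then shows via the recurrences \eqref{recurrenceab} and \eqref{auxeq} that the expression at $k+2$ equals the expression at $k$ plus the nonpositive term $a(4-ab)d_{k+1}$ (respectively $b(4-ab)c_{k+1}$), and finally verifies $k=1$ separately when $b\ge 2$. Your argument instead identifies the two polynomials as $\tfrac{b}{2}(\beta_1^k-\alpha_2,\beta_1^k-\alpha_2)$ and $\tfrac{b}{2}(\beta_2^k-\alpha_1,\beta_2^k-\alpha_1)$, expands the norm using $(\beta_i^k,\beta_i^k)\in\{2,2a/b\}$ and the recurrence $bc_k=d_{k+1}+d_{k-1}$, and reduces everything to the integer gaps $d_{k+1}-d_{k-1}\ge 2$ and $c_{k+1}-c_{k-1}\ge 2$ (strengthened to $c_{k+1}-c_{k-1}\ge 1+a/b$ in one odd-$k$ case), which you then read off from Lemmas~\ref{increasing} and~\ref{lemrelcd}. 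This is more conceptual and explains \emph{why} the sign flips—it is exactly the monotonicity of the sequences $c_j,d_j$—whereas the paper's induction is quicker to write and entirely self-contained. Note that the paper does record the norm interpretation, but only later, in the proof of Proposition~\ref{keyproppos}; you are effectively pulling that observation forward and using it to bypass the induction.

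One minor expository slip: the interleaving $d_{2j}<d_{2j-1}<d_{2j+2}<d_{2j+1}$ from Lemma~\ref{increasing}(2) is stated only for $j\ge 2$, so it yields $d_{k+1}-d_{k-1}\ge 2$ from $k\ge 4$ (even) and $k\ge 5$ (odd), not ``$k\ge 3$''. You do check both $d_3-d_1$ and $d_4-d_2$ by hand, so nothing is missing, but the phrase ``at $k=2$'' should read ``at $k=2,3$''.
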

\begin{proof}
    For $k=2$ the above expressions become $a(4-ab)$ and $b(4-ab)$ respectively and for $k=3$, they both become $a+b-ab(ab-3)$. It is easy to see that they are both non-positive integers by the given conditions. Now assume that both the equations hold for some $k\geq 2.$ Using the recurrence \ref{recurrenceab} and relations in Equation \ref{auxeq} we have 
    \begin{align*}
        &bc_{k+2}^2+ad_{k+3}^2-abc_{k+2}d_{k+3}+abc_{k+2}-2ad_{k+3}+a\\
        =& \underbrace{bc_k^2+ad_{k+1}^2-abc_kd_{k+1}+abc_{k}-2ad_{k+1}+a}+a(4-ab)d_{k+1}
    \end{align*}
    and 
    \begin{align*}
    & bc_{k+3}^2+ad_{k+2}^2-abd_kc_{k+3}-2bc_{k+3}+abd_{k+2}+b\\=&\underbrace{bc_{k+1}^2+ad_k^2-abd_kc_{k+1}-2bc_{k+1}+abd_k+b}+b(4-ab)c_{k+1}
    \end{align*}

    Both the bracketed terms are $\le 0$ by induction hypothesis and the remaining term is also non-positive since $ab\geq 4.$ Thus the set of integers for which both the equations hold, satisfies the property (\ref{propstar}) of Lemma \ref{lemstar} for $k=2$ and therefore both hold for $k\geq 2.$ Now it is easy to check that both of the inequalities are satisfied for $k=1$ if $b\geq 2.$ This completes the proof.
\end{proof}
\begin{lem}\label{lem2}
    For all $k\geq 0$ the following strict inequalities hold.
    \begin{equation}\label{funnyeq00}
        bc_{k+1}^2+ad_{k}^2-abc_{k+1}d_{k}+abc_{k+1}-2ad_{k}+a> 0,
    \end{equation}
    \begin{equation}\label{funnyeq01}
        bc_k^2+ad_{k+1}^2-abc_kd_{k+1}-2bc_k+abd_{k+1}+b> 0.
    \end{equation}
\end{lem}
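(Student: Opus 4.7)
The plan is to mirror the proof of Lemma \ref{lem1}: verify both inequalities for small $k$ and then apply Lemma \ref{lemstar} with an induction step of size two, leveraging the recurrences \eqref{recurrenceab} and \eqref{auxeq}.

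The key observation I would exploit is that the quadratic parts of both expressions are invariants of $k$. For any real root $\beta = c\alpha_1 + d\alpha_2$ one has $(\beta,\beta) = \frac{2}{b}(bc^2 + ad^2 - abcd)$, so applying this to $\beta_1^k = c_k\alpha_1 + d_{k+1}\alpha_2$ (of squared length $\frac{2a}{b}$) and to $\beta_2^k = c_{k+1}\alpha_1 + d_k\alpha_2$ (of squared length $2$), I obtain the identities
\begin{equation*}
bc_k^2 + ad_{k+1}^2 - abc_kd_{k+1} = a, \qquad bc_{k+1}^2 + ad_k^2 - abc_{k+1}d_k = b
\end{equation*}
for every $k \in \bz_+$. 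This collapses the quadratic parts, so that \eqref{funnyeq00} is equivalent to $E_k := abc_{k+1} - 2ad_k + (a+b) > 0$ and \eqref{funnyeq01} to $F_k := -2bc_k + abd_{k+1} + (a+b) > 0$.

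I would then compute $E_{k+2} - E_k$ and $F_{k+2} - F_k$ directly. From \eqref{recurrenceab} and \eqref{auxeq} one gets $c_{k+2} - c_k = ad_{k+1} - 2c_k$, $d_{k+2} - d_k = bc_{k+1} - 2d_k$, $c_{k+3} - c_{k+1} = (ab-2)c_{k+1} - ad_k$, and $d_{k+3} - d_{k+1} = (ab-2)d_{k+1} - bc_k$. A short algebraic simplification then yields
\begin{equation*}
E_{k+2} - E_k = a(ab-4)\, d_{k+2}, \qquad F_{k+2} - F_k = b(ab-4)\, c_{k+2},
\end{equation*}
both of which are non-negative since $ab \geq 4$ and $c_i, d_i \geq 0$. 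The base cases follow by direct substitution: $E_0 = a+ab+b$, $E_1 = a^2b$, $F_0 = a+ab+b$, $F_1 = ab^2$, all strictly positive. An application of Lemma \ref{lemstar} with $k=0$ then concludes the proof.

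I do not foresee any serious obstacle: the algebraic bookkeeping is the only real work, and it is made transparent by the invariants above. Notice the pleasing symmetry with Lemma \ref{lem1}, where the analogous extra term $a(4-ab)d_{k+1}$ was non-positive, forcing the weak inequality in one direction; here the sign flips to $a(ab-4)d_{k+2} \geq 0$, which drives the strict inequality in the opposite direction.
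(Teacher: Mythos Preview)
Your norm-invariant shortcut is appealing but incorrect when $a\neq b$: the roots $\beta_2^k$ do not all have the same length. By \eqref{rootfourtypes}, $\beta_2^{2k}$ is $W$-conjugate to $\alpha_1$ (squared length $2$) while $\beta_2^{2k+1}$ is $W$-conjugate to $\alpha_2$ (squared length $2a/b$), so $bc_{k+1}^2+ad_k^2-abc_{k+1}d_k$ equals $b$ for even $k$ but $a$ for odd $k$; the other quadratic form alternates the same way. Hence your equivalences \eqref{funnyeq00}$\iff E_k>0$ and \eqref{funnyeq01}$\iff F_k>0$ hold only for even $k$, and your stated base values at $k=1$ are off: in fact $E_1=a^2b-a+b$ and $F_1=ab^2+a-b$, whereas the full expressions at $k=1$ are $a^2b$ and $ab^2$. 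For \eqref{funnyeq01} at odd $k$ the needed inequality is $F_k>a-b$, not merely $F_k>0$, so the argument as written does not close.

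Your step-two increments $E_{k+2}-E_k=a(ab-4)d_{k+2}$ and $F_{k+2}-F_k=b(ab-4)c_{k+2}$ are correct, and this is precisely what the paper establishes directly for the full quadratic expressions without any invariant. You can repair your proof either by dropping the invariant and running the induction on the full expressions (as the paper does), or by treating the two parities separately and checking that $E_1=a^2b-a+b>0$ and $F_1=ab^2+a-b>a-b$, so that both branches of Lemma~\ref{lemstar} still go through.
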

\begin{proof}
    For $k=0$ both the equations become $a+b+ab$ and  for $k=1,$ they become $a^2b$ and $ab^2$ respectively and clearly they are positive integers. Now using the recurrence \ref{recurrenceab} and relations in Equation \ref{auxeq} we have 
    \begin{align*}
        & bc_{k+3}^2+ad_{k+2}^2-abc_{k+3}d_{k+2}+abc_{k+3}-2ad_{k+2}+a\\
        &=bc_{k+1}^2+ad_{k}^2-abc_{k+1}d_{k}+abc_{k+1}-2ad_{k}+a+ab(ab-4)c_{k+1}+a(4-ab)d_{k}\\
        &=\underbrace{bc_{k+1}^2+ad_{k}^2-abc_{k+1}d_{k}+abc_{k+1}-2ad_{k}+a}+a(ab-4)d_{k+2}
    \end{align*}
     and 
     \begin{align*}
        &bc_{k+2}^2+ad_{k+3}^2-abc_{k+2}d_{k+3}-2bc_{k+2}+abd_{k+3}+b\\
        &=bc_k^2+ad_{k+1}^2-abc_kd_{k+1}-2bc_k+abd_{k+1}+b+b(4-ab)c_{k}+ab(ab-4)d_{k+1}\\
        &=\underbrace{bc_k^2+ad_{k+1}^2-abc_kd_{k+1}-2bc_k+abd_{k+1}+b}+b(ab-4)c_{k+2}
    \end{align*}
    Both the bracketed terms are $>0$ by induction hypothesis and the remaining term is also non-negative since $ab\geq 4.$ Now the proof is completed by Lemma \ref{lemstar}.
\end{proof}

Relations in \ref{rootfourtypes} give us a convenient way to write the sum (and the difference) of two real positive roots as (without loss of generality assume $j\geq k$) 
\begin{equation}\label{altsumanddiffsame}
    \begin{split}
        \beta_{1}^{2k}\pm\beta_{1}^{2j}=&(s_2s_1)^k(\alpha_2\pm\beta_{1}^{2(j-k)}),\  \beta_{1}^{2k+1}\pm\beta_{1}^{2j+1}=s_2(s_1s_2)^k(\alpha_1\pm\beta_{2}^{2(j-k)})\\
        \beta_{2}^{2k}\pm\beta_{2}^{2j}=&(s_1s_2)^k(\alpha_1\pm\beta_{2}^{2(j-k)}),\  \beta_{2}^{2k+1}\pm\beta_{2}^{2j+1}=s_1(s_2s_1)^k(\alpha_2\pm\beta_{1}^{2(j-k)})\\
        \beta_{1}^{2k}\pm\beta_{1}^{2j+1}=&(s_2s_1)^k(\alpha_2\pm\beta_{1}^{2(j-k)+1}),\  \beta_{2}^{2k}\pm\beta_{2}^{2j+1}=(s_1s_2)^k(\alpha_1\pm\beta_{2}^{2(j-k)+1}),
    \end{split}
\end{equation}
Other cases can be checked similarly. In any case, the element $\beta_1^j\pm\beta_2^k$ is $W$ conjugate to either $\alpha_1\pm\beta_2^r$ or $\alpha_2\pm\beta_1^s$ for some $r,s\in\bz_+.$  Similarly, considering the sum and difference of roots of different types, we obtain 
\begin{equation}\label{altdifftype}
    \begin{split}
        \beta_1^{2j}\pm\beta_2^{2k}=(s_1s_2)^{k}&((s_2s_1)^{j+k}\alpha_2\pm\alpha_1),\ \ \beta_1^{2j}\pm\beta_2^{2k+1}=(s_1s_2)^{k}s_1(s_1(s_2s_1)^{j+k}\alpha_2\pm\alpha_2),\\
        &\beta_1^{2k+1}\pm\beta_2^{2j}=(s_2s_1)^{k}s_2(\alpha_1\pm(s_2s_1)^{j+k}s_2\alpha_1),\\
        &\beta_1^{2k+1}\pm\beta_2^{2j+1}=s_2(s_1s_2)^{k}(\alpha_1\pm(s_2s_1)^{j+k+1}\alpha_2).
    \end{split}
\end{equation}
In any case, the element $\beta_1^j\pm\beta_2^k$ is $W$ conjugate to either $\alpha_1\pm\beta_1^r$ or $\alpha_2\pm\beta_2^s$ for some $r,s\in\bz_+.$

\begin{prop}\label{keyproppos}
    Let the roots $\beta_{i}^{k},\ i=1,2$ and $k\in \bz_+$ be given by Equation \ref{realrootstypes}. Then the following hold for $i=1,2.$ If 
    \begin{enumerate}
        \item $b\geq 2,$ then the difference $\beta_{i}^{j}-\beta_{i}^{k}$ is an imaginary root for $j\neq k$ and the difference $\beta_{1}^{j}-\beta_{2}^{k}$ is never a root for any $j,k\in\bz_+.$
        \item $b=1$ and $a\geq 5,$ then $\beta_{i}^{j}-\beta_{i}^{k}$ is an imaginary root when $|j-k|>1.$ Moreover, $\beta_{i}^{j}-\beta_{i}^{j+1}$ is a real root. However, the difference $\beta_{1}^{j}-\beta_{2}^{k}$ is a root if and only if it is a real root if and only if $j=1$ and $k=0.$
    \end{enumerate}
    In particular, $\beta_{i}^{j}-\beta_{i}^{k}$ is always a root for $j\neq k,\ i=1,2.$
\end{prop}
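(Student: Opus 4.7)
The plan is to use the identities in \eqref{altsumanddiffsame} and \eqref{altdifftype} to reduce every difference of two distinct positive real roots, up to the $W$-action and an overall sign, to one of four canonical differences: $\alpha_2-\beta_1^r$ and $\alpha_1-\beta_2^r$ for the same-type differences $\beta_i^j-\beta_i^k$ ($j\neq k$), and $\alpha_1-\beta_1^r$ and $\alpha_2-\beta_2^r$ for the mixed-type differences $\beta_1^j-\beta_2^k$. Since Weyl group elements preserve the root system and send real (respectively, imaginary) roots to real (respectively, imaginary) roots, it is enough to decide when these four canonical elements are roots. A routine computation using $(\alpha_1,\alpha_1)=2$, $(\alpha_2,\alpha_2)=2a/b$, $(\alpha_1,\alpha_2)=-a$ together with $\beta_1^r=c_r\alpha_1+d_{r+1}\alpha_2$ and $\beta_2^r=c_{r+1}\alpha_1+d_r\alpha_2$ shows that $(b/2)(\gamma,\gamma)$ for each canonical $\gamma$ matches exactly one of the four polynomial expressions in Lemmas \ref{lem1} and \ref{lem2}; hence the two same-type canonical forms have non-positive norm (for $r\geq 1$ if $b\geq 2$, and for $r\geq 2$ if $b=1, a\geq 5$), while the two mixed-type canonical forms have strictly positive norm for every $r\geq 0$.

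For case (1), where $b\geq 2$, the same-type canonical forms are non-zero by the strict monotonicity of $(c_k)$ and $(d_k)$ in Lemma \ref{increasing}(1), so Lemma \ref{keylem}(2) makes them imaginary roots. For the mixed-type forms, positive norm rules out imaginary; to rule out real I would switch to $-(\alpha_1-\beta_1^r)\in Q^+$ (valid for $r\geq 1$; the case $r=0$ gives $\alpha_1-\alpha_2$ of mixed sign, not a root) and check whether it coincides with one of $c_j\alpha_1+d_{j+1}\alpha_2$ or $c_{j+1}\alpha_1+d_j\alpha_2$. Matching coefficients and using strict monotonicity of $(c_k)$ and $(d_k)$ produces an immediate contradiction in each matching, and the same runs for $\alpha_2-\beta_2^r$; the leftover piece $\alpha_1-\beta_1^1=-b\alpha_2$ is not a root because $b\alpha_2$ has positive norm $2ab$ but is not $W$-conjugate to a simple root for $b\geq 2$.

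For case (2), with $b=1$ and $a\geq 5$, the same reasoning handles all canonical forms with $r\geq 2$. The boundary cases $r=1$ require direct computation: $\alpha_2-\beta_1^1=-\alpha_1$ and, using $c_3=a-1, d_2=1$, $\alpha_1-\beta_2^1=-\beta_2^2$, both real roots; tracing these back through \eqref{altsumanddiffsame} one sees they correspond exactly to the same-type differences with $|j-k|=1$. For the mixed-type forms, the $Q^+/Q^-$ matching argument still applies but must be run inside each parity class of $(c_k)$ and $(d_k)$ (using Lemma \ref{increasing}(2)); this shows $\alpha_2-\beta_2^r$ is never a root and $\alpha_1-\beta_1^r$ is a root only for $r=1$, where $\alpha_1-\beta_1^1=-\alpha_2$. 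The main obstacle, and the place demanding genuine bookkeeping, is then to run through all four parity cases of \eqref{altdifftype} and verify that the only pair $(j,k)$ whose reduction lands on $\alpha_1-\beta_1^1$ is $(j,k)=(1,0)$, coming from $\beta_1^{2k+1}-\beta_2^{2j}=(s_2s_1)^k s_2(\alpha_1-\beta_1^{2(j+k)+1})$ with $j=k=0$. The final assertion that $\beta_i^j-\beta_i^k$ is always a root then follows because in every case it is either declared imaginary or explicitly computed to be real.
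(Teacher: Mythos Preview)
Your reduction via \eqref{altsumanddiffsame} and \eqref{altdifftype} to the four canonical forms and the norm computations using Lemmas~\ref{lem1} and~\ref{lem2} are exactly the paper's argument; the same-type part is identical.

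The one genuine difference is in how you show the mixed-type canonical forms $\alpha_1-\beta_1^r$ and $\alpha_2-\beta_2^r$ (which have positive norm) fail to be real roots. For $b\geq 2$ the paper dispatches this in one line via Lemma~\ref{sumnotreal} (since both off-diagonal entries of $A$ are $\leq -2$, no sum of two real roots is real), and for $b=1,\,a\geq 5$ it quotes \cite[Theorem~3.6]{carbone2021commutator}. Your direct coefficient-matching against the list $\{c_j\alpha_1+d_{j+1}\alpha_2,\ c_{j+1}\alpha_1+d_j\alpha_2\}$ is a valid self-contained alternative, but for $b=1$ it is more work than the phrase ``run inside each parity class'' suggests: the sequences $(c_k)$ and $(d_k)$ interleave across parities (Lemma~\ref{increasing}(2)), so cross-parity equalities must be explicitly excluded (the only one is $d_1=d_2=1$, but that has to be checked), and several low-index leftovers such as $\alpha_2-\beta_2^1=-a\alpha_1$ and $\alpha_1-\beta_1^2=-(a-1)(\alpha_1+\alpha_2)$ fall outside the matching scheme and need a direct appeal to the integrality criterion in Lemma~\ref{keylem}. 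Your final trace through the four parity cases of \eqref{altdifftype}, isolating $(j,k)=(1,0)$ as the unique exception, is correct. The trade-off: your route is self-contained but longer; the paper's is shorter but leans on Lemma~\ref{sumnotreal} and an external reference.
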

\begin{proof}
We first prove both the statements concerning the difference of the roots of same type namely the case when $\beta_i^j-\beta_i^k$ for $i=1,2$ can be a root. Let $\mu_k=\alpha_2-\beta_{1}^{k}$ and $\nu_k=\alpha_1-\beta_2^k,\ k\in\bz_+.$  It is clear from Equations in \ref{altsumanddiffsame} that it is enough to prove that
    \begin{enumerate}
    \item $\mu_k$ and $\nu_{k}$ are both imaginary roots for $k\geq 1$ if $b\geq 2,$
    \item $\mu_{k}$ and $\nu_{k}$ are both imaginary roots for $k\geq 2$ and a real root for $k=1,$ if $b=1$ and $a\geq 5.$
    \end{enumerate} 
    Then we have 
    $$(\mu_k,\mu_k)=\frac{2}{b}\bigg(bc_k^2+ad_{k+1}^2-abc_kd_{k+1}+abc_{k}-2ad_{k+1}+a\bigg),$$
    $$(\nu_k,\nu_k)=\frac{2}{b}\bigg(bc_{k+1}^2+ad_k^2-abc_{k+1}d_k-2bc_{k+1}+abd_k+b\bigg).$$ By Lemma \ref{lem1} , both $(\mu_k,\mu_k)$ and $(\nu_k,\nu_k)$ are $\le 0$ for all $k\geq 1$ if $b\geq 2$ and for all $k\geq 2$ if $b=1$ and $a\geq 5.$ Rest follows from the fact that $\mu_1$ and $\nu_1$ are both real roots if $b=1$.

    Now we shall consider the case of the difference of two roots of different type. Let $\mu_k=\alpha_1-\beta_{1}^{k},\nu_k=\alpha_2-\beta_{2}^{k},\ k\in\bz_+.$ Using relations in \ref{altdifftype}, it is enough to prove  that $(\mu_k,\mu_k)>0,\ (\nu_k,\nu_k)>0$ for all $k\geq 0$ and 
\begin{enumerate}
    \item $\nu_k$ is never a root,
    \item $\mu_k$ is not a root if $b\geq 2,$
    \item $\mu_k$ is a root if and only if $k=1$ if $a\geq 5,b=1.$
\end{enumerate}
Note that we have 
$$(\mu_k,\mu_k)=\frac{2}{b}\bigg(bc_k^2+ad_{k+1}^2-abc_kd_{k+1}-2bc_k+abd_{k+1}+b\bigg),$$
$$(\nu_k,\nu_k)=\frac{2}{b}\bigg(bc_{k+1}^2+ad_{k}^2-abc_{k+1}d_{k}+abc_{k+1}-2ad_{k}+a\bigg),$$
and both are positive by Lemma \ref{lem2}. If $b\geq 2,$ then $\mu_k$ and $\nu_k$ are not roots by Lemma \ref{sumnotreal}. It remains to prove that if $a\geq 5$ and $b=1,$ then $\nu_k$ is not a root and $\gamma_k$ is a root if and only if $k=1.$ By \cite[Theorem 3.6]{carbone2021commutator}, the alternative description of the real positive roots in \ref{rootfourtypes} and  Lemma \ref{increasing}, among all the roots 
$$(s_2s_1)^{s+t}\alpha_2-\alpha_1,\ (s_1s_2)^{s+t}s_1\alpha_2-\alpha_2,\ \ \alpha_1-(s_2s_1)^{s+t}s_2\alpha_1,\ \ \alpha_1-(s_2s_1)^{s+t+1}\alpha_2$$  only $\alpha_1-(s_2s_1)^{s+t}s_2\alpha_1$ is a real root if and only if $s=t=0.$  In particular, only $\mu_k$ can be a root and it is a root if and only if $k=1.$ This also implies that $\nu_k$ is never a root. This completes the proof.
\end{proof}
\noindent We are now ready to prove Theorem \ref{Thmab}, the main Theorem of this Section.\medskip

\noindent \textbf{Proof of Theorem \ref{Thmab}:}
     Let $\Sigma\subseteq \Delta^+_{\mathrm{re}}$ be a $\pi$-system in $\Delta.$ By Proposition \ref{keyproppos} it can not contain two roots of same type. Hence $\Sigma=\{\beta_1^k,\beta_2^j\}$ for some $k,j\in\bz_+$ and therefore $|\Sigma|\le 2.$ Conversely, if $b>1,$ then by Proposition \ref{keyproppos}, we have that $\Sigma=\{\beta_1^j,\beta_2^k\}$ is a $\pi$-system for any $k,j\in\bz_+.$ If $b=1$ and $a\geq 5,$ then $\Sigma=\{\beta_1^j,\beta_2^k\}$ is a $\pi$-system if and only if $(j,k)\neq (1,0).$ This completes the proof.

\subsection{} In this subsection we shall completely classify (upto isomorphism) the root generated subalgebras of $\lie g$ along with their types using our classification of $\pi$-systems in Theorem \ref{Thmab}. Again, we point out that along with \cite[Theorem 1]{habib2023root}, the generalized Cartan matrix types of root generated subalgebras have been listed in \cite[Table 2]{carbone2021commutator}. In this Section, we determine precisely the types of the root generated subalgebras $\lie g(\Sigma)$ where $\Sigma$ is defined by Theorem \ref{Thmab}. This will give a more explicit description of the root generated subalgebras.
We begin with the following Lemma.
\begin{lem}\label{lem1noncongruent}
    Let the roots $\beta_{i}^{k},\ i=1,2$ and $k\in \bz_+$ be defined by Equation \ref{realrootstypes}. Define two sequences $\eta_j$ and $\gamma_j$ for $j\in\bz_+$ by
    $$\eta_j:=-\langle \beta_1^j,(\beta_2^0)^\vee
    \rangle,\ \ \gamma_j:=-\langle \beta_2^{2j},(\beta_1^0)^\vee
    \rangle.$$
    Then both the sequences $\xi_j:=\eta_{2j},\ \zeta_{j}:=\eta_{2j+1}$ satisfy the recurrence relation $$a_j=(ab-2)a_{j-1}-a_{j-2}$$ with different initial conditions
    $$\xi_0=a,\xi_1=a(ab-3);\ \ \zeta_0=(ab-2),\ \zeta_1= (\zeta_0^2-2).$$ Moreover, we have $a\gamma_j=b\xi_j,$\ $\eta_j$ is constant for $a=b=2$ and 
    $$\begin{cases}
        \eta_1<\eta_0<\eta_3<\eta_2<\eta_5<\cdots\ \ & \text{ if } b=1\text{ and }\ a\geq 5,\\
        \eta_0<\eta_1<\eta_2<\cdots \ \ &\text{ otherwise}.\\
    \end{cases}$$
\end{lem}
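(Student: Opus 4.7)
The plan is to start by unpacking the pairings. Since $\beta_2^0 = \alpha_1$ and $\beta_1^0 = \alpha_2$, we have $(\beta_2^0)^\vee = \alpha_1^\vee$ and $(\beta_1^0)^\vee = \alpha_2^\vee$; using $\langle \alpha_j, \alpha_i^\vee \rangle = a_{ij}$ together with $\beta_1^j = c_j\alpha_1 + d_{j+1}\alpha_2$ and $\beta_2^{2j} = c_{2j+1}\alpha_1 + d_{2j}\alpha_2$, this gives
\[
\eta_j = a\, d_{j+1} - 2\, c_j, \qquad \gamma_j = b\, c_{2j+1} - 2\, d_{2j}.
\]
The identity $a\gamma_j = b\xi_j$ is then immediate from Lemma \ref{lemrelcd} since $c_{2j+1} = d_{2j+1}$ and $b c_{2j} = a d_{2j}$.

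Next I would extract a cleaner expression for $\eta_j$ using the defining recurrence $c_{j+2} + c_j = a d_{j+1}$, which yields the telescoping formula $\eta_j = c_{j+2} - c_j$. To produce a three-term recurrence of step $2$ for $\{c_n\}$, I substitute $a d_{n-1} = c_n + c_{n-2}$ into Equation \ref{auxeq} to obtain
\[
c_{n+2} = (ab-2)\, c_n - c_{n-2} \qquad (n \geq 2),
\]
and analogously $d_{n+2} = (ab-2)\, d_n - d_{n-2}$. Both $\{c_{2k}\}$ and $\{c_{2k+1}\}$ therefore satisfy the linear recurrence $u_{k+1} = (ab-2)u_k - u_{k-1}$, and by linearity the same holds for $\xi_k$ and $\zeta_k$. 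The initial values $\xi_0 = a$, $\xi_1 = a(ab-3)$, $\zeta_0 = ab-2$ and $\zeta_1 = (ab-2)^2 - 2$ are a direct calculation from $c_2 = a$, $d_2 = b$, $c_3 = d_3 = ab-1$, $c_4 = a(ab-2)$, $d_4 = b(ab-2)$, and the identity $\zeta_1 = \zeta_0^2 - 2$ is then the expansion of $(ab-2)^2 - 2$.

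When $a = b = 2$ the recurrence is $a_j = 2a_{j-1} - a_{j-2}$ and all four initial values collapse to $2$, so $\xi_j \equiv \zeta_j \equiv 2$ and $\eta_j$ is constant. In every other case covered by the section we have $ab \geq 5$, hence $ab - 2 \geq 3$, and the \textbf{positive-and-nondecreasing} step
\[
u_{k+1} = (ab-2)u_k - u_{k-1} \geq 3u_k - u_k = 2u_k > u_k,
\]
valid whenever $0 < u_{k-1} \leq u_k$, together with the base comparisons $\xi_1 > \xi_0 > 0$ and $\zeta_1 > \zeta_0 > 0$, immediately gives strict monotonicity of each of the two subsequences.

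The most delicate point is the interleaving of $\xi$ and $\zeta$, and I expect this to be the main obstacle. For it I would introduce $\delta_j = \zeta_j - \xi_j$ and $\mu_j = \xi_{j+1} - \zeta_j$ (or, in the $b=1$ regime, $\nu_j = \zeta_{j+1} - \xi_j$), each of which inherits the same three-term recurrence. For $b \geq 2$, $(a,b) \neq (2,2)$, short direct expansions show $\delta_0 = a(b-1) - 2 > 0$, $\mu_0 = ab(a-1) - 3a + 2 > 0$, and $\delta_1 > \delta_0$, $\mu_1 > \mu_0$; the same induction as above propagates these inequalities, giving $\xi_0 < \zeta_0 < \xi_1 < \zeta_1 < \cdots$. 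For $b = 1$ and $a \geq 5$ the signs flip: $\delta_0 = -2$ and $\delta_1 = 2 - a$, so $\delta_1 < \delta_0 < 0$, whereas $\nu_0 = a^2 - 5a + 2 > 0$ for $a \geq 5$ and $\nu_1 > \nu_0$; the symmetric \emph{negative-and-nonincreasing} induction then yields the zig-zag $\zeta_0 < \xi_0 < \zeta_1 < \xi_1 < \cdots$. The real work is the four-way base-case bookkeeping — once the initial signs and orderings of $\delta$ and $\mu$ (resp. $\nu$) are in hand, the shared recurrence with $ab - 2 \geq 3$ does the rest.
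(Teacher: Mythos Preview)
Your argument is correct and runs largely parallel to the paper for the first half: both reduce to $\eta_j=c_{j+2}-c_j$, derive the step-two recurrence $c_{n+2}=(ab-2)c_n-c_{n-2}$, read off the initial values, and dispatch $a=b=2$ and $a\gamma_j=b\xi_j$ by direct computation together with Lemma~\ref{lemrelcd}.

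Where you genuinely diverge is in the monotonicity/interleaving step. The paper works with consecutive differences $\eta_{j+1}-\eta_j$, simplifies them via the recurrences to closed forms such as $(b-2)c_{2r+2}+(a-2)d_{2r+1}$, and then reads off signs (leaning on Lemma~\ref{increasing} in the $b=1$ case, where two such differences must be combined to get $\eta_{2k+1}-\eta_{2k-2}>0$). You instead exploit linearity: the gap sequences $\delta_j=\zeta_j-\xi_j$, $\mu_j=\xi_{j+1}-\zeta_j$, $\nu_j=\zeta_{j+1}-\xi_j$ inherit the same three-term recurrence with $ab-2\ge 3$, so positivity (or negativity) plus the correct ordering at indices $0,1$ self-propagates. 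This is cleaner structurally and avoids any appeal to Lemma~\ref{increasing}; the price is that your base-case checks (e.g.\ $\mu_1\ge\mu_0$, $\nu_1\ge\nu_0$) are slightly heavier algebra than the paper's, though still routine --- for instance $\mu_1-\mu_0=(ab-4)\bigl[(a-1)ab-2a+1\bigr]$ and $\nu_1-\nu_0=a^3-8a^2+17a-4$ are easily seen to be positive in the relevant ranges. Both routes are valid; yours trades explicit difference formulas for a uniform recurrence argument.
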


\begin{proof}
     It is easy to see that the sequences $\xi_j$ and $\zeta_j$ satisfy the above recurrence since by  \cite[Lemma 2.1]{carbone2021commutator}, both $c_{2j},d_{2j+1},\ j\in\bz_+$ satisfy the same recurrence and $\xi_j=c_{2j+2}-c_{2j},\ \zeta_j=d_{2j+3}-d_{2j+1}$.
    Using the recurrence \ref{recurrenceab}, a simple computation shows that $\eta_{j+1}-\eta_j=c_{j+3}-c_{j+1}+c_j-c_{j+2}.$ Note that comparing with the inequalities in \cite[Lemma 3.1]{carbone2021commutator} we can only conclude that $\eta_{j+1}\geq\eta_j$ for all $j\in\bz_+.$ Instead, again using the recurrences in \ref{recurrenceab}, we obtain
    \begin{equation}\label{dummyeqn}
        \eta_{j+1}-\eta_j=\begin{cases}
        (b-2)c_{2r+2}+(a-2)d_{2r+1}\ & \ \text{ if } j=2r,\\
        (a-2)d_{2r+3}+(b-2)c_{2r+2}\ & \ \text{ if } j=2r+1.
    \end{cases}
    \end{equation}
    Thus $\eta_j$ is constant if $a=b=2.$ If $(a,b)\neq (2,2)$ and $b\neq 1,$ we have $\eta_{j+1}>\eta_j$ for all $j\geq 0.$ For $b=1$ and $a\geq 5,$ using Equation \ref{dummyeqn}, a straightforward computations show that 
    $$\eta_{2k+1}-\eta_{2k-2}\geq d_{2k+1}-d_{2k-1}+d_{2k-3}>0,\  \text{ and }$$
    $$\eta_{2r+1}-\eta_{2r}=d_{2r-1}-d_{2r+1}<0,$$
    for all $k\geq 2$ and $r\geq 1.$ It is trivial to check that $\eta_1<\eta_0<\eta_3$ and therefore $\eta_1<\eta_0<\eta_3<\eta_2<\eta_5<\cdots$ as required. The fact $a\gamma_j=b\xi_j$ follows from an easy computation. This completes the proof.
\end{proof}

The following Lemma is crucial to classify the root generated subalgebras upto isomorphism.
\begin{lem}\label{lem2noncongruent}
    Let the roots $\beta_{i}^{k},\ i=1,2$ and $k\in \bz_+$ be defined by Equation \ref{realrootstypes}. If $j\not\equiv k\ \mathrm{mod}\ 2,$ then the following hold.
    \begin{enumerate}
        \item $\langle\beta_{1}^{j},(\beta_{2}^{k})^\vee\rangle=\langle\beta_{2}^{k},(\beta_{1}^{j})^\vee\rangle$ for all $j,k\in\bz_+.$
        \item if $j+k=\ell+i,$ then $(\beta_{1}^{j},(\beta_{2}^{k})^\vee)=(\beta_{1}^{\ell},(\beta_{2}^{i})^\vee)$.
    \end{enumerate}
\end{lem}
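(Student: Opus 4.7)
\textbf{Proof plan for Lemma \ref{lem2noncongruent}.}

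The key tool is the alternative description of the positive real roots via Weyl group action given in \eqref{rootfourtypes} together with the $W$-invariance of the bilinear form $(\cdot,\cdot)$ on $\lie h^*$ and the pairing $\langle\cdot,\cdot^\vee\rangle$ (i.e.\ $\langle w\mu,(w\nu)^\vee\rangle=\langle\mu,\nu^\vee\rangle$ for $w\in W$).

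\textbf{Part (1).} From \eqref{rootfourtypes} the root $\beta_1^j$ lies in the $W$-orbit of $\alpha_2$ when $j$ is even and in the $W$-orbit of $\alpha_1$ when $j$ is odd; dually, $\beta_2^k$ is $W$-conjugate to $\alpha_1$ when $k$ is even and to $\alpha_2$ when $k$ is odd. Since $j\not\equiv k \pmod 2$, the two roots $\beta_1^j$ and $\beta_2^k$ are $W$-conjugate to the \emph{same} simple root. Hence by Weyl invariance of $(\cdot,\cdot)$,
\[
(\beta_1^j,\beta_1^j)=(\beta_2^k,\beta_2^k),
\]
and using $\langle\mu,\nu^\vee\rangle=\frac{2(\mu,\nu)}{(\nu,\nu)}$ together with symmetry of the bilinear form we obtain
\[
\langle\beta_1^j,(\beta_2^k)^\vee\rangle
=\frac{2(\beta_1^j,\beta_2^k)}{(\beta_2^k,\beta_2^k)}
=\frac{2(\beta_2^k,\beta_1^j)}{(\beta_1^j,\beta_1^j)}
=\langle\beta_2^k,(\beta_1^j)^\vee\rangle.
\]

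\textbf{Part (2).} I would split into the two parity sub-cases and use \eqref{rootfourtypes} to ``move'' both arguments into a common reference position via a Weyl element.

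If $j=2m$ and $k=2n+1$, then by $W$-invariance
\[
\langle\beta_1^{2m},(\beta_2^{2n+1})^\vee\rangle
=\big\langle (s_2s_1)^m\alpha_2,\,\bigl((s_1s_2)^n s_1\alpha_2\bigr)^\vee\big\rangle
=\big\langle \alpha_2,\,(\beta_2^{2(m+n)+1})^\vee\big\rangle,
\]
which depends only on $j+k=2(m+n)+1$. If instead $j=2m+1$ and $k=2n$, applying $s_2(s_1s_2)^m=\bigl((s_2s_1)^ms_2\bigr)^{-1}$ to both slots gives
\[
\langle\beta_1^{2m+1},(\beta_2^{2n})^\vee\rangle
=\big\langle \alpha_1,\,(\beta_1^{2(m+n)+1})^\vee\big\rangle,
\]
again depending only on $j+k$. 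Hence if $(\ell,i)$ belongs to the same parity sub-case as $(j,k)$ with $\ell+i=j+k$, the equality is immediate.

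It remains to reconcile the two sub-cases, i.e.\ to show $\langle\alpha_2,(\beta_2^n)^\vee\rangle=\langle\alpha_1,(\beta_1^n)^\vee\rangle$ for $n=j+k$ odd. Writing $\beta_2^n=c_{n+1}\alpha_1+d_n\alpha_2$ and $\beta_1^n=c_n\alpha_1+d_{n+1}\alpha_2$, a direct computation with the bilinear form reduces both sides to a linear combination of $c_n,d_n,c_{n+1},d_{n+1}$, and the desired equality becomes
\[
2d_n-bc_{n+1}=2c_n-ad_{n+1},
\]
which follows at once from Lemma \ref{lemrelcd}: since $n$ is odd we have $c_n=d_n$, and since $n+1$ is even we have $bc_{n+1}=ad_{n+1}$, so both sides are equal.

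\textbf{Main obstacle.} There is no real conceptual obstacle; the only delicate point is bookkeeping across the two mixed-parity sub-cases, because the raw bilinear pairing $(\beta_1^j,\beta_2^k)$ itself differs by the factor $(\beta_2^k,\beta_2^k)\in\{2,2a/b\}$ between sub-cases, so the equality in (2) holds precisely because it is stated for the \emph{normalized} pairing $\langle\cdot,(\cdot)^\vee\rangle$, and this is what Lemma \ref{lemrelcd} is used to verify.
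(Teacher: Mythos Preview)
Your proof is correct and follows essentially the same route as the paper. Part~(1) is identical: both you and the paper use Equation~\eqref{rootfourtypes} to conclude $(\beta_1^j,\beta_1^j)=(\beta_2^k,\beta_2^k)$ when $j\not\equiv k\pmod 2$. For Part~(2), the paper reduces to the ``step'' equality $\langle\beta_1^j,(\beta_2^k)^\vee\rangle=\langle\beta_1^{j+1},(\beta_2^{k-1})^\vee\rangle$ and then computes both sides via \eqref{rootfourtypes}, obtaining exactly your expressions $2d_n-bc_{n+1}$ and $2c_n-ad_{n+1}$ with $n=j+k$, and closes with Lemma~\ref{lemrelcd}; your version simply makes the $W$-invariance reduction to $\langle\alpha_2,(\beta_2^{j+k})^\vee\rangle$ and $\langle\alpha_1,(\beta_1^{j+k})^\vee\rangle$ more explicit before doing the same computation.
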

\begin{proof}
$(1)$ follows from the fact $(\beta_1^j,\beta_1^j)=(\beta_2^k,\beta_2^k)$ if $j\not\equiv k\ \mathrm{mod}\ 2$ (c.f. Equation \ref{rootfourtypes}). Note that to prove $(2)$ it is enough to show that $\langle\beta_{1}^{j},(\beta_{2}^{k})^\vee\rangle=\langle\beta_{1}^{j+1},(\beta_{2}^{k-1})^\vee\rangle$ for all $j\in\bz_+$ and $k\geq 1.$ First assume that $j=2s$ and $k=2r+1$ for some $r,s\in\bz_+.$ Using Equation \ref{rootfourtypes}, a direct computations shows that 
$$\langle \beta_{1}^{j},(\beta_{2}^{k})^\vee\rangle=2d_{2(r+s)+1}-bc_{2(r+s)+2},\ \ \langle \beta_{1}^{j+1},\beta_{2}^{k-1}\rangle=2c_{2(r+s)+1}-ad_{2(r+s)+2}.$$
If $j=2s+1$ and $k=2r,$ then again we have 
$$\langle \beta_{1}^{j},(\beta_{2}^{k})^\vee\rangle=2c_{2(r+s)+1}-ad_{2(r+s)+2},\ \ \langle \beta_{1}^{j+1},\beta_{2}^{k-1}\rangle=2d_{2(r+s)+1}-bc_{2(r+s)+2}.$$

Now the result follows from Lemma \ref{lemrelcd}.
\end{proof}

The following easy Lemma deals with the case when $j\equiv k\ \mathrm{mod}\ 2.$ It is easy to prove and has interesting consequences.
\begin{lem}\label{lem1congruent}
    Let the roots $\beta_{i}^{k},\ i=1,2$ and $k\in \bz_+$ be defined by Equation \ref{realrootstypes}. Then the following hold.
    \begin{enumerate}
        \item If $j\equiv k\ \mathrm{mod}\ 2,$ then $(\beta_1^j,\beta_2^k)=(\beta_1^{j+1},\beta_2^{k-1})$ for all $j\in \bz_+$ and $k\geq 1,$
        \item The sequence $\langle \beta_1^{2k},(\beta_2^0)^\vee \rangle$ is decreasing and is strictly decreasing if $ab>4.$
        \item We have $a\langle \beta_2^{2k},(\beta_1^0)^\vee \rangle=b\langle \beta_1^{2k},(\beta_2^0)^\vee \rangle$ for all $k\in\bz_+.$
    \end{enumerate}
\end{lem}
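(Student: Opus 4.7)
My plan is to prove all three parts by direct computation using the explicit formulas $\beta_1^j = c_j\alpha_1 + d_{j+1}\alpha_2$ and $\beta_2^k = c_{k+1}\alpha_1 + d_k\alpha_2$, the bilinear form values $(\alpha_1,\alpha_1) = 2$, $(\alpha_2,\alpha_2) = 2a/b$, $(\alpha_1,\alpha_2) = -a$, the recurrence \eqref{recurrenceab}, and the identities from Lemma \ref{lemrelcd}. The key initial observation is that $\beta_1^0 = \alpha_2$ and $\beta_2^0 = \alpha_1$, since $c_0 = d_0 = 0$ and $c_1 = d_1 = 1$; hence $(\beta_1^0)^\vee = \alpha_2^\vee$ and $(\beta_2^0)^\vee = \alpha_1^\vee$, which reduces Parts (2) and (3) to computing pairings with the simple coroots.

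For Part (1), I would expand both $(\beta_1^j,\beta_2^k)$ and $(\beta_1^{j+1},\beta_2^{k-1})$ using the bilinear form, obtaining two polynomial expressions in the $c_i$ and $d_i$, and then split into the two parity cases allowed by the hypothesis $j \equiv k \pmod 2$. In the both-even case $j = 2s$, $k = 2r$, the identity $c_{2\ell} = (a/b)d_{2\ell}$ of Lemma \ref{lemrelcd} converts all even-indexed $c$'s into multiples of the corresponding $d$'s, after which the two expressions collapse to the same polynomial in $d_{2s}, d_{2s+1}, d_{2r}, d_{2r+1}$. The both-odd case is symmetric, using $c_{2\ell+1} = d_{2\ell+1}$ instead.

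For Part (2), substituting $(\beta_2^0)^\vee = \alpha_1^\vee$ gives $\langle \beta_1^{2k},(\beta_2^0)^\vee \rangle = 2c_{2k} - ad_{2k+1}$, which by the recurrence $c_{2k+2} + c_{2k} = ad_{2k+1}$ simplifies to $c_{2k} - c_{2k+2} = -\xi_k$ in the notation of Lemma \ref{lem1noncongruent}. Since that Lemma already establishes $\xi_k$ is weakly increasing in all cases under consideration, and strictly increasing whenever $(a,b) \neq (2,2)$ (equivalently $ab > 4$, given the standing assumption $ab \geq 4$), the desired monotonicity follows at once. Part (3) is then a one-line computation: $\langle \beta_2^{2k},(\beta_1^0)^\vee \rangle = 2d_{2k} - bc_{2k+1} = d_{2k} - d_{2k+2}$ by the recurrence $d_{2k+2} + d_{2k} = bc_{2k+1}$, and the identity $a(d_{2k} - d_{2k+2}) = b(c_{2k} - c_{2k+2})$ is then immediate from $c_{2\ell} = (a/b)d_{2\ell}$ of Lemma \ref{lemrelcd}.

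The only genuine obstacle is Part (1), where the expansion is lengthy but purely mechanical; the parity hypothesis is used precisely so that the two identities of Lemma \ref{lemrelcd} apply \emph{simultaneously} to both arguments of the bilinear form, allowing the difference between the two sides to cancel after substitution.
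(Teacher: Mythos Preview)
Your approach is correct in spirit, but the description of Part~(1) is a bit too optimistic. After applying Lemma~\ref{lemrelcd} alone, the two sides do \emph{not} become the same polynomial in $d_{2s},d_{2s+1},d_{2r},d_{2r+1}$: the right-hand side $(\beta_1^{2s+1},\beta_2^{2r-1})$ is built from $c_{2s+1},d_{2s+2},c_{2r},d_{2r-1}$, so after the conversion it lives in $d_{2s+1},d_{2s+2},d_{2r},d_{2r-1}$ --- a different set of variables. You need one further application of the recurrences (e.g.\ $d_{2s+2}=bd_{2s+1}-d_{2s}$ and $d_{2r-1}=ad_{2r}-d_{2r+1}$, both consequences of \eqref{recurrenceab} together with Lemma~\ref{lemrelcd}) before the two sides match. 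Once that is done your computation goes through, so the gap is real but easily repaired.

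The paper takes a much shorter route for Part~(1): using Equation~\ref{rootfourtypes} and the $W$-invariance of $(\cdot,\cdot)$, one sees immediately that $(\beta_1^j,\beta_2^k)$ depends only on $j+k$ when $j\equiv k\pmod 2$; for instance, $(\beta_1^{2s},\beta_2^{2r})=(\alpha_2,(s_1s_2)^{s+r}\alpha_1)=(\alpha_1,(s_2s_1)^{s+r}\alpha_2)=(\beta_1^{2s+1},\beta_2^{2r-1})$. This avoids all coefficient-chasing. Your brute-force expansion has the virtue of being entirely self-contained, but the Weyl-group argument is what makes the parity hypothesis transparent.

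For Parts~(2) and~(3) your argument is fine and close to the paper's. The paper computes the difference in~(2) directly as $\langle\beta_1^{2k},(\beta_2^0)^\vee\rangle-\langle\beta_1^{2k+2},(\beta_2^0)^\vee\rangle=(ab-4)c_{2k+2}$ rather than invoking Lemma~\ref{lem1noncongruent}, but either route works.
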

\begin{proof}
    $(1)$ follows from Equation \ref{rootfourtypes} since $j\equiv k\ \mathrm{mod}\ 2.$ Using recurrence \ref{recurrenceab}, an easy computation shows that $\langle \beta_1^{2k},(\beta_2^0)^\vee\rangle = c_{2k}-c_{2k+2}$ and 
    $$\langle \beta_1^{2k},(\beta_2^0)^\vee\rangle-\langle \beta_1^{2k+2},(\beta_2^0)^\vee\rangle=(ab-4)c_{2k+2},$$ and thus $(2)$ is proved. $(3)$ follows from simple computation.
\end{proof}

\begin{rem}\label{remconseq}
    Assume that $j\equiv k\ \mathrm{mod}\ 2.$ The above Lemma has the following consequences.
\begin{enumerate}
    \item $\langle \beta_1^j,(\beta_2^k)^\vee \rangle=\langle \beta_1^{j+2},(\beta_2^{k-2})^\vee \rangle$ and $\langle \beta_2^k,(\beta_1^j)^\vee \rangle=\langle \beta_2^{k+2},(\beta_1^{j-2})^\vee \rangle.$\vspace{0,2cm}\label{1st}
    \item $\langle \beta_1^{j+1},(\beta_2^{k-1})^\vee \rangle=\langle \beta_2^{k},(\beta_1^{j})^\vee \rangle$ and $\langle \beta_2^{k+1},(\beta_1^{j-1})^\vee \rangle=\langle \beta_1^{j},(\beta_2^{k})^\vee \rangle.$\vspace{0,2cm}\label{2nd}
    \item Each of the sequences below is decreasing and strictly decreasing if $ab\geq 5:$
    $$\langle \beta_2^{2r},(\beta_1^0)^\vee\rangle,\ \ \ \langle \beta_1^{2r+1},(\beta_2^1)^\vee\rangle,\ \ \ \langle \beta_2^{2r+1},(\beta_1^1)^\vee\rangle,\ \ r\in\bz_+.$$\label{3rd}
    \item If $a>b,$ then for $r\in\bz_+$ we have
    $$\langle \beta_2^{2r+1},(\beta_1^1)^\vee\rangle<\langle \beta_1^{2r+1},(\beta_2^1)^\vee\rangle<-1,\ \ \langle \beta_1^{2r},(\beta_2^0)^\vee\rangle<\langle \beta_2^{2r},(\beta_1^0)^\vee\rangle\le -1,$$ where the equality holds if and only if $r=0$ and $b=1$ (for any $a$).\vspace{0,2cm}\label{4th}
    \item If $a=b,$ then $\langle \beta_1^{2r+1},(\beta_2^1)^\vee\rangle=\langle \beta_2^{2r+1},(\beta_1^1)^\vee\rangle.$ Moreover, if $a\neq 2,$ then $$\langle \beta_2^{2r},(\beta_1^0)^\vee\rangle >\langle \beta_1^{2r+1},(\beta_2^1)^\vee\rangle.$$ Otherwise i.e. when $a=b=2,$ we have $\langle \beta_2^{2r},(\beta_1^0)^\vee\rangle=\langle \beta_1^{2r+1},(\beta_2^1)^\vee\rangle$ for all $r\in\bz_+.$ \label{5th}
\end{enumerate}
\end{rem}

\noindent The following Theorem classifies the root generated subalgebras of a rank $2$ Kac-Moody algebra upto isomorphism.
\begin{thm}
    Let $A=\begin{psmallmatrix}
        2& -a\\ -b&2
    \end{psmallmatrix}$ be a GCM of rank $2$ such that either $b\geq 2$ or $b=1$ and $a\geq 5.$ Let $\Sigma_{j,k}:=\{\beta_1^j,\beta_2^k\}$ be the $\pi$-system defined in Theorem \ref{Thmab}. Let $B_{\Sigma_{j,k}}$ be the matrix associated to the $\pi$-system $\Sigma_{j,k}.$
    \begin{enumerate}
        \item If $j\not\equiv k\ \mathrm{mod}\ 2,$ then the matrix $B_{\Sigma_{j,k}}$ is symmetric and we have $\lie g(\Sigma_{j,k})\cong \lie g(\Sigma_{j+k,0})$ whenever $\Sigma_{j+k,0}$ is a $\pi$-system. Moreover, \vspace{0,2cm}
        \begin{enumerate}
            \item if $ab\geq 5,$ then $\lie g(\Sigma_{2r+1,0})\cong \lie g(\Sigma_{2s+1,0})$ if and only if $r=s,$\vspace{0,2cm}
            \item if $a=b=2,$ then $\lie g(\Sigma_{2r+1,0})\cong\lie g(\Sigma_{2s+1,0})$ for all $r,s\in\bz_+.$
        \end{enumerate}
        \medskip
        
        \noindent Conversely, given a symmetric generalized Cartan matrix $S=\begin{psmallmatrix}
            2& -s\\ -s& 2
        \end{psmallmatrix},$ we have $S=B_{\Sigma_{j,k}}$ for some $\pi$-system $\Sigma_{j,k}$ defined above such that $j\not\equiv k\ \mathrm{mod}\ 2$ if and only if $s$ is a solution of the recurrence relation
        $$a_n=(ab-2)a_{n-1}-a_{n-2},\ \ \ a_0=ab-2,\ a_1=a_0^2-2.$$
        
        \item If $j\equiv k\ \mathrm{mod}\ 2,$ then the matrix $B_{\Sigma_{j,k}}$ is symmetric if and only if $a=b$ and we have $$B_{\Sigma_{j,k}}= \begin{cases}
            B_{\Sigma_{j+k,0}}\ &\text{ if } j\equiv 0\ \mathrm{mod}\ 2,\\
            B_{\Sigma_{j+k-1,1}}\ &\text{ if }j\not\equiv 0\ \mathrm{mod}\ 2.
        \end{cases}$$
        Moreover, 
        \begin{enumerate}
            \item we have $\lie g(\Sigma_{2r,0})\cong \lie g(\Sigma_{2r-1,1})$ for all $r\geq 1,$\vspace{0,2cm}
            \item if $ab\geq 5,$ then $\lie g(\Sigma_{2r,0})\cong \lie g(\Sigma_{2s,0})$ if and only if $r=s,\ \ r,s\in\bz_+,$\vspace{0,2cm}
            \item if $a=b=2,$ then $\lie g(\Sigma_{2r,0})\cong \lie g(\Sigma_{2s,0})$ for all $r,s\in \bz_+.$
        \end{enumerate}
        \medskip
        
        \noindent Conversely, given a generalized Cartan matrix $C=\begin{psmallmatrix}
            2& -c_1\\ -c_2& 2
        \end{psmallmatrix},$ we have $C=B_{\Sigma_{j,k}}$ for some 
        $j,k\in\bz_+$ with $j\equiv k\ \mathrm{mod}\ 2$ if and only if $(c_1,c_2)=(x_n,y_n)$ for some $n\in\bz_+$ where $x_n$ and $y_n$ are given by the recurrences  
        $$x_n=(ab-2)x_{n-1}-x_{n-2},\ \ \ x_0=b,\ x_1=b(ab-3),$$
        $$y_n=(ab-2)y_{n-1}-y_{n-2},\ \ \ y_0=a,\ y_1=a(ab-3),$$
    \end{enumerate}
\end{thm}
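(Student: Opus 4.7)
The plan is to verify each statement by computing the two off-diagonal entries of $B_{\Sigma_{j,k}}$, namely $\langle \beta_2^k, (\beta_1^j)^\vee\rangle$ and $\langle \beta_1^j, (\beta_2^k)^\vee\rangle$, directly from the preceding lemmas, and then to distinguish the resulting rank $2$ Kac-Moody algebras via their Cartan matrices up to transposition (which corresponds to interchanging the two simple roots).

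For case (1), with $j \not\equiv k \bmod 2$, Lemma \ref{lem2noncongruent}(1) immediately gives symmetry of $B_{\Sigma_{j,k}}$, while part (2) of the same lemma shows that the common off-diagonal entry depends only on $j+k$; iterating this yields $B_{\Sigma_{j,k}} = B_{\Sigma_{j+k,0}}$. For the isomorphism classification I would use the odd-indexed subsequence $\zeta_r = \eta_{2r+1}$ of Lemma \ref{lem1noncongruent}: when $ab \geq 5$, either of the monotonicity chains in that lemma yields $\zeta_0 < \zeta_1 < \cdots$, so the symmetric matrices $B_{\Sigma_{2r+1,0}}$ are pairwise distinct, giving (a); when $a=b=2$ the sequence $\eta_j$ is constant, so all these matrices coincide, giving (b). The converse description of the admissible symmetric entries $s$ then reads directly off the recurrence and initial conditions for $\zeta_r$ recorded in Lemma \ref{lem1noncongruent}.

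For case (2), with $j \equiv k \bmod 2$, Equation \ref{rootfourtypes} forces $\{(\beta_1^j,\beta_1^j),(\beta_2^k,\beta_2^k)\} = \{2, 2a/b\}$, so $B_{\Sigma_{j,k}}$ is symmetric if and only if $a=b$. To prove the canonical reductions $B_{\Sigma_{j,k}} = B_{\Sigma_{j+k,0}}$ (for $j$ even) and $B_{\Sigma_{j,k}} = B_{\Sigma_{j+k-1,1}}$ (for $j$ odd), I would iterate Remark \ref{remconseq}(\ref{1st}) on both off-diagonal entries, shifting $(j,k) \mapsto (j+2, k-2)$ until $k$ is $0$ or $1$ according to its parity. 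For (a), combining Remark \ref{remconseq}(\ref{2nd}) applied at $(j,k)=(2r,0)$ with one application of (\ref{1st}) identifies $B_{\Sigma_{2r-1,1}}$ with the transpose of $B_{\Sigma_{2r,0}}$; since a rank $2$ Kac-Moody algebra is isomorphic to the one built from its transposed GCM, this gives (a).

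For (b) and (c) in case (2), Lemma \ref{lem1congruent}(2) together with Remark \ref{remconseq}(\ref{3rd}) establishes strict monotonicity of the off-diagonal entries $x_n,y_n$ in $n$ when $ab \geq 5$, and their constancy when $a=b=2$. The main subtlety is ruling out an accidental isomorphism via transposition: $\lie g(B_{\Sigma_{2r,0}}) \cong \lie g(B_{\Sigma_{2s,0}})$ with $r \neq s$ would force $(x_r, y_r) = (y_s, x_s)$, but combined with the relation $ax_n = by_n$ (a consequence of Lemma \ref{lem1congruent}(3) and Remark \ref{remconseq}(\ref{1st})), this yields $(a/b)^2 x_s = x_s$, forcing $a=b$ and hence $r = s$ by strict monotonicity. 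Finally, the converse characterization of $(c_1, c_2)=(x_n, y_n)$ follows by reading off the linear recurrence for $x_n$ and $y_n$ directly from Equation \ref{recurrenceab}, mirroring the argument of Lemma \ref{lem1noncongruent}.
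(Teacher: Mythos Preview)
Your proposal is correct and follows essentially the same route as the paper: both arguments reduce everything to Lemma~\ref{lem1noncongruent}, Lemma~\ref{lem2noncongruent}, Lemma~\ref{lem1congruent} and Remark~\ref{remconseq}, using the monotonicity of the sequences $\xi_j,\zeta_j$ to separate the isomorphism classes and the transpose relation $B_{\Sigma_{2r,0}}=B_{\Sigma_{2r-1,1}}^{t}$ for part~2(a).

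The one place where you diverge is in excluding accidental isomorphisms via transposition in~2(b). The paper assumes $B_{\Sigma_{2k,0}}=B_{\Sigma_{2r,0}}^{t}$ with $k<r$, rewrites both sides in terms of the $c_j,d_j$ and, combining Lemma~\ref{lemrelcd} with the strict monotonicity in Remark~\ref{remconseq}(\ref{3rd}), produces the inequality $\tfrac{b-a}{a}(c_{2r}-c_{2r+2})<0$, contradicting $a\ge b$ and $c_{2r}<c_{2r+2}$. Your argument instead exploits the relation $ax_n=by_n$ (equivalently $a\gamma_j=b\xi_j$ from Lemma~\ref{lem1noncongruent}) to conclude $(a/b)^2=1$, hence $a=b$, after which strict monotonicity finishes. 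Both arguments are short and use the same ingredients; yours is a bit more algebraic and avoids the explicit inequality chain, while the paper's stays closer to the raw sequences.
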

\begin{proof}
    The statements in $(1)$ follow from Lemma \ref{lem1noncongruent} and Lemma \ref{lem2noncongruent}. To prove $(2),$ first note that $B_{\Sigma_{2k,0}}=B_{\Sigma_{2k-1,1}}^t$ and thus proving $(a).$ The conclusion in $(c)$ follows from Remark \ref{remconseq}. The only case remains to prove is $2(b).$ Since $\eta_{2j},\ j\in\bz_+$ is strictly increasing by Lemma \ref{lem1noncongruent}, we have $\lie g(\Sigma_{2k,0})\cong \lie g(\Sigma_{2r,0})$ for some $k<r$ if and only if $B_{\Sigma_{2k,0}}=B_{\Sigma_{2r,0}}^t.$ Note that $B_{\Sigma_{2k,0}}=B_{\Sigma_{2r,0}}^t$ if and only if \begin{align*}
    &\quad \langle\beta_1^{2r},(\beta_2^{0})^\vee\rangle=\langle\beta_2^0,(\beta_1^{2k})^\vee\rangle \text{ and  } \langle\beta_1^{2k},(\beta_2^{0})^\vee\rangle=\langle\beta_2^0,(\beta_1^{2r})^\vee\rangle,\\
    \Longleftrightarrow &\quad c_{2r}-c_{2r+2}=d_{2k}-d_{2k+2} \text{ and } c_{2k}-c_{2k+2}=d_{2r}-d_{2r+2}.
    \end{align*}
    Since $k<r,$ using Lemma \ref{lemrelcd} and Remark \ref{1st}, we obtain 
    \begin{align*}
        &\frac{b}{a}(c_{2r}-c_{2r+2})=d_{2r}-d_{2r+2}=\langle\beta_2^0,(\beta_1^{2r})^\vee\rangle=\langle\beta_2^{2r},(\beta_1^{0})^\vee\rangle\\
        &<\langle\beta_2^{2k},(\beta_1^{0})^\vee\rangle=\langle\beta_2^0,(\beta_1^{2k})^\vee\rangle=-d_{2k+2}+d_{2k}=c_{2r}-c_{2r+2},
    \end{align*}
    which in turn implies $\frac{(b-a)}{a}(c_{2r}-c_{2r+2})<0$
    which is impossible by Lemma \ref{increasing}. This completes the proof.
\end{proof}

Table \ref{tablesummary} summarizes all the results obtained in this Section, the sequences $\xi_j$ and $\zeta_j$ are given by Lemma \ref{lem1noncongruent}.

\medskip

\begin{table}[ht]
    \centering
    \renewcommand{\arraystretch}{2}
    \begin{tabular}{|c|c|c|c|c|c|}
 \hline
 Case  & $\pi$-system $\Sigma$ of the form & Subcase   & Nature of $B_{\Sigma}$   & Form of $B_\Sigma$    & Type   \\
 \hline\hline
 \multirow{3}*{$ab\geq 5$} & $\Sigma_{2j+1,0}$ & any &symmetric  &  $\begin{psmallmatrix}
     2 & -\zeta_{j}\\ -\zeta_{j}&2
 \end{psmallmatrix}$   & Hyperbolic    \\
 \cline{2-6} &  \multirow{2}*{$\Sigma_{2j,0}$}   & $a= b$ & symmetric  & $\begin{psmallmatrix}
     2 & -\xi_{j}\\ -\xi_{j}&2
 \end{psmallmatrix}$   & Hyperbolic \\
\cline{3-6}
 & & $a\neq b$ & non-symmetric  & $\begin{psmallmatrix}
     2 & -\frac{b}{a}\xi_{j}\\ -\xi_{j}&2
 \end{psmallmatrix}$  & Hyperbolic \\
 \hline
 \multirow{2}*{$a=b=2$} & $\Sigma_{1,0}$ & any  & symmetric  & $\begin{psmallmatrix}
     2 & -2\\ -2 &2
 \end{psmallmatrix}$    & Affine   \\
 \cline{2-6} &  $\Sigma_{0,0}$   & any  & symmetric   & $\begin{psmallmatrix}
     2 & -2\\ -2&2
 \end{psmallmatrix}$& Affine \\
\hline
\end{tabular}
    \caption{Root generated subalgebras of $\lie g(A)$}
    \label{tablesummary}
\end{table}

\section{Classifications of \texorpdfstring{$\pi$}{pi}-systems} \label{clpisystems}
In this Section we aim to classify all the $\pi$-systems of a rank $2$ Kac-Moody algebra. From Theorem \ref{Thmab} it is clear that a $\pi$-system $\Sigma\subseteq \Delta^{\mathrm{re}}$ satisfies $|\Sigma|\le 4.$ We shall systematically investigate the possibilities. Eventually we shall show that any $\pi$-system $\Sigma$ satisfies $|\Sigma|\le 2.$ To understand the $\pi$-system including negative roots, we need to understand when the sum of two positive real roots is again a root. We answer this question in what follows in this Section. In \cite{carbone2021commutator} the authors described explicitly when a sum of two real roots is a real root. The main result of this Section is the following.
\begin{thm}\label{NThmab}
    Let $A= \begin{psmallmatrix} 2 & -a \\-b & 2 \end{psmallmatrix}$ be a generalized Cartan matrix. Then every $\pi$-system $\Sigma\subseteq\Delta_{\mathrm{re}}$ of $\lie g(A)$ such $|\Sigma|>1$ is either given by Theorem \ref{Thmab} (or it's negative) or is of the form $\Sigma=\{\beta_i^j,-\beta_i^k\}$ for some $j,k\in\bz_+$ and $i=1,2.$ In particular, $\Sigma$ satisfies $|\Sigma|\le 2$ and therefore $\Sigma$ is linearly independent unless $\Sigma=\{\alpha,-\alpha\}$ for some $\alpha\in\Delta^+_{\mathrm{re}}.$ 
    
    Conversely, if $b\geq 2,$ then the subset $\Sigma=\{\beta_i^j,-\beta_i^k\}$ is a $\pi$-system for any $j,k\in \bz_+$ and $i=1,2.$ If $b=1$ and $a\geq 5,$ then $\Sigma=\{\beta_i^j,-\beta_i^k\}$ is a $\pi$-system if and only if $\{j,k\}\neq \{s,s+2\}$ for some $s\in\bz_+$ such that $s$ is even (resp. odd)  if $i$ is even (resp. odd).
\end{thm}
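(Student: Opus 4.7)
My plan is to reduce the classification to a single question: given positive real roots $\alpha, \gamma \in \Delta^+_{\mathrm{re}}$, when is $\alpha + \gamma$ not a root of $\lie g$? If $\Sigma \subseteq \Delta^+_{\mathrm{re}}$ or $\Sigma \subseteq \Delta^-_{\mathrm{re}}$, then Theorem \ref{Thmab} (applied directly or to $-\Sigma$) handles $\Sigma$. So I may assume $\Sigma$ contains a positive root $\alpha$ and a negative root $-\gamma$ with $\gamma \in \Delta^+_{\mathrm{re}}$; the $\pi$-system condition then forces $\alpha - (-\gamma) = \alpha + \gamma \notin \Delta$.

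The first main step will be a \emph{cross-type sum lemma}: for every $j, k \in \bz_+$, the sum $\beta_1^j + \beta_2^k$ is a root of $\lie g$ (real or imaginary). Using \eqref{altdifftype}, the Weyl group conjugates this sum to one of $\alpha_1 + \beta_1^r$ or $\alpha_2 + \beta_2^s$ for some $r, s \in \bz_+$. A bilinear-form computation parallel to Lemma \ref{lem1}, with signs adjusted for a sum, will show that $(\alpha_1 + \beta_1^r, \alpha_1 + \beta_1^r) \leq 0$ for every $r \geq 1$, so the sum is imaginary by Lemma \ref{keylem}; the remaining case $r = 0$ is $\alpha_1 + \alpha_2$, which is imaginary when $b \geq 2$ and equals the real root $\beta_1^1$ when $b = 1$. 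The $\alpha_2 + \beta_2^s$ side is symmetric. Consequently $\Sigma$ cannot contain $\beta_1^j$ together with $-\beta_2^k$ in either sign configuration.

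Combining the cross-type lemma with Proposition \ref{keyproppos}, I will conclude that a mixed-sign $\pi$-system must have both its roots of a single type: any other positive root in $\Sigma$ would either share its type with $\alpha$ (forcing a difference with $\alpha$ to be a root by Proposition \ref{keyproppos}) or be of the other type (forcing a sum with $\gamma$ to be a root by the cross-type lemma); the negative side is symmetric. This forces $|\Sigma| \leq 2$ and $\Sigma = \{\beta_i^j, -\beta_i^k\}$.

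For the converse, I will determine when $\beta_i^j + \beta_i^k \in \Delta$. The case $j = k$ gives $2\beta_i^j$, never a root, so $\{\beta_i^j, -\beta_i^j\}$ is always a $\pi$-system. For $j \neq k$, identities \eqref{altsumanddiffsame} translate the sum into $\alpha_2 + \beta_1^r$ or $\alpha_1 + \beta_2^r$, with $r$ determined by the parities of $j$ and $k$. The key identities $\alpha_2 + \beta_1^2 = a\, \beta_1^1$ and $\alpha_1 + \beta_2^2 = b\, \beta_2^1$, verified directly from the recurrences \eqref{recurrenceab} and Lemma \ref{lemrelcd}, show that since $a \geq 2$ always, the former is never a root, while the latter is the real root $\beta_2^1$ precisely when $b = 1$. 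For $r \geq 3$ the analogous identifications present $\alpha_2 + \beta_1^r$ and $\alpha_1 + \beta_2^r$ as positive integer multiples (with multiplier $\geq 2$) of real roots with positive norm, hence not roots by Lemma \ref{keylem}. Threading these identifications through the parity cases produces the stated exception $\{j, k\} = \{s, s+2\}$ (with the parity of $s$ dictated by $i$) only in the hyperbolic case $b = 1$, $a \geq 5$, and no exception for $b \geq 2$. The main obstacle I anticipate is establishing the multiplier identities uniformly for $r \geq 3$, which will likely require an auxiliary Chebyshev-like recurrence for the multipliers in terms of the sequences $c_j, d_j$, in the same spirit as Lemma \ref{lem1noncongruent}.
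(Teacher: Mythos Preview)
Your forward direction and the cross-type sum lemma are essentially what the paper does: the paper packages both parts (cross-type sums are always roots, same-type sums have positive norm) into Proposition~\ref{Nsumroot}, proved via new norm inequalities analogous to Lemmas~\ref{lem1} and~\ref{lem2} (these are Lemmas~\ref{lem41} and~\ref{lem42} in the paper). So that half of your plan is sound.

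The converse, however, has a genuine gap. Your proposed ``multiplier identities'' for $r\ge 3$ are simply false in general. For $b\ge 2$ they already fail at $r=3$: with $a=3,\ b=2$ one computes $\beta_1^3=5\alpha_1+8\alpha_2$, hence $\alpha_2+\beta_1^3=5\alpha_1+9\alpha_2$, which has $\gcd(5,9)=1$ and therefore is not a nontrivial integer multiple of any lattice element, let alone of a real root. Even for $b=1$ the pattern breaks down: with $a=5,\ b=1$ one finds $\alpha_1+\beta_2^5=41\alpha_1+11\alpha_2$ and $\alpha_2+\beta_1^5=11\alpha_1+9\alpha_2$, both primitive. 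So no Chebyshev-type recurrence for the multipliers can exist, and the step you flagged as the main obstacle is in fact an obstruction. (You also leave $r=1$ unhandled, though that case is easy.)

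The paper's route for the converse is different and does work: one shows, by an inequality parallel to Lemma~\ref{lem1} but with the sign of the linear terms reversed (this is Lemma~\ref{lem41}), that $(\alpha_2+\beta_1^r,\alpha_2+\beta_1^r)>0$ and $(\alpha_1+\beta_2^r,\alpha_1+\beta_2^r)>0$ for all $r\ge 0$, so these sums can never be imaginary roots. For $b\ge 2$ one then invokes Lemma~\ref{sumnotreal} to rule out real roots as well. For $b=1,\ a\ge 5$ the positive norm alone does not suffice, and the paper instead appeals to the explicit description of when a sum of real roots is real from \cite[Theorem~3.6]{carbone2021commutator}, which singles out exactly $\alpha_1+\beta_2^2$ as the unique exceptional sum. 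Tracing this through \eqref{altsumanddiffsame} yields precisely the $\{s,s+2\}$ exception with the stated parity.
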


\subsection{} We begin this subsection with a couple of Lemmas which provide important inequalities to prove Theorem \ref{NThmab}.
\begin{lem}\label{lem41}
    For all $k\in\bz_+,$ the following strict inequalities hold:
    \begin{equation}\label{eq23}
        bc_k^2+ad_{k+1}^2-abc_kd_{k+1}-abc_k+2ad_{k+1}+a> 0.
    \end{equation}
    \begin{equation}\label{eq33}
        bc_{k+1}^2+ad_{k}^2-abc_{k+1}d_{k}+2bc_{k+1}-abd_{k}+b> 0.
    \end{equation}
\end{lem}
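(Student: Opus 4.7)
The proof will follow the blueprint of Lemmas \ref{lem1} and \ref{lem2}: verify base cases at small $k$, propagate via the recurrences \ref{recurrenceab} and the auxiliary identities \ref{auxeq}, and invoke Lemma \ref{lemstar} to reach all $k\in\bz_+$.

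A preliminary symmetry observation cuts the work in half. Writing $F_1(k)$ and $F_2(k)$ for the left-hand sides of \ref{eq23} and \ref{eq33} respectively, the involution that simultaneously swaps $a\leftrightarrow b$ and $c_j\leftrightarrow d_j$ preserves both recurrences $c_{k+2}+c_k=ad_{k+1}$ and $d_{k+2}+d_k=bc_{k+1}$, and a direct term-by-term check shows it sends $F_1(k)$ to $F_2(k)$. Hence it is enough to establish \ref{eq23}, and \ref{eq33} follows formally.

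For base cases, substituting $c_0=d_0=0$, $c_1=d_1=1$, $c_2=a$, $d_2=b$ yields $F_1(0)=4a$ and $F_1(1)=a+b+ab$, both strictly positive integers. For the inductive step, I would substitute $c_{k+2}=ad_{k+1}-c_k$ and $d_{k+3}=(ab-1)d_{k+1}-bc_k$ into $F_1(k+2)$, expand, and collect terms. The expected outcome, by analogy with the computations in the proof of Lemma \ref{lem2}, is an identity of the form
\begin{equation*}
F_1(k+2)\;=\;F_1(k)\;+\;a(ab-4)\,d_{k+1}
\end{equation*}
(a sanity check at $k=0$: $F_1(2)=a^2b=4a+a(ab-4)\cdot 1$, as it should be). Since $d_{k+1}>0$ by Lemma \ref{increasing} and $ab\geq 4$ under the standing hypothesis that $b\geq 2$ or $(b=1,a\geq 5)$, the remainder is non-negative, so strict positivity of $F_1$ propagates from index $k$ to index $k+2$. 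Together with the two base cases this verifies the hypotheses of property $(\star)$ in Lemma \ref{lemstar} at $k=0$, giving $F_1(k)>0$ for every $k\in\bz_+$. The mirrored identity $F_2(k+2)=F_2(k)+b(ab-4)c_{k+1}$ then closes out \ref{eq33} via the symmetry.

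The main obstacle is the purely computational verification of the recurrence identity $F_1(k+2)-F_1(k)=a(ab-4)d_{k+1}$; the substitutions produce a sizeable pile of monomials in $a,b,c_k,d_k,d_{k+1}$ whose cancellations must be tracked carefully. This is a routine if tedious polynomial expansion, exactly parallel to the algebra already carried out in Lemmas \ref{lem1} and \ref{lem2}, and the symmetry observation both halves the work and provides an internal consistency check on the final remainder.
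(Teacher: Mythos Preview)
Your proposal is correct and follows essentially the same route as the paper: verify the base cases $k=0,1$, establish the recurrence identities $F_1(k+2)=F_1(k)+a(ab-4)d_{k+1}$ and $F_2(k+2)=F_2(k)+b(ab-4)c_{k+1}$, and conclude via Lemma~\ref{lemstar}. The paper carries out both computations in parallel rather than invoking the $a\leftrightarrow b$, $c_j\leftrightarrow d_j$ symmetry you observe, but that symmetry is a legitimate shortcut for deducing the second identity from the first (as formal consequences of the recurrences~\ref{recurrenceab}), and the inductive argument itself is identical.
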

\begin{proof}
    For $k=0$ the above equations become $4a$ and $4b$ respectively and for $k=1,$ both equations become $a+b+ab$ which are all strictly positive. Now we have 
    \begin{align*}
        &bc_{k+2}^2+ad_{k+3}^2-abc_{k+2}d_{k+3}-abc_{k+2}+2ad_{k+3}+a\\&=        \underbrace{bc_k^2+ad_{k+1}^2-abc_kd_{k+1}-abc_k+2ad_{k+1}+a}+a(ab-4)d_{k+1}
    \end{align*}
    and
    \begin{align*}
        & bc_{k+3}^2+ad_{k+2}^2-abc_{k+3}d_{k+2}+2bc_{k+3}-abd_{k+2}+b\\&=        \underbrace{bc_{k+1}^2+ad_{k}^2-abc_{k+1}d_{k}+2bc_{k+1}-abd_{k}+b}+b(ab-4)c_{k+1}
    \end{align*}
    By induction the bracketed terms are positive, and the remaining terms are non-negative since $ab\geq 4$. Now the result follows from Lemma \ref{lemstar}.
\end{proof}
\begin{rem}
    Note that these inequalities are independent of those obtained in Lemma \ref{lem2}. For example, for $a=5,b=1$ Equation \ref{eq23} stays grater than Equation \ref{funnyeq01} in Lemma \ref{lem2} for $k=0,1,\cdots,5$ whereas Equation \ref{eq23} alternates between being greater and lesser than Equation \ref{funnyeq00} in Lemma \ref{lem2}. A similar pattern applies to Equation \ref{eq33}.
\end{rem}

\begin{lem}\label{lem42}
    Both of the inequalities hold:
    \begin{equation}\label{eq24}
    bc_{k+1}^2+ad_k^2-abc_{k+1}d_k+2ad_k-abc_{k+1}+a\leq 0
    \end{equation}
    \begin{equation}\label{eq34}
    bc_k^2+ad_{k+1}^2-abd_{k+1}c_k-abd_{k+1}+2bc_k+b\leq 0
    \end{equation}
    \begin{enumerate}
        \item for all $k\geq 0$ if $b\geq2$.
        \item for all $k\geq 1$ if $b=1$ and $a\geq 5$.
    \end{enumerate}
\end{lem}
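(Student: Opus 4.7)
The plan is to adapt the inductive strategy used in Lemmas \ref{lem1}, \ref{lem2}, and \ref{lem41}: verify two consecutive base values of $k$ directly, then show the inequalities for $k+2$ follow from those for $k$ via the recurrences in \ref{auxeq}, so that Lemma \ref{lemstar} applies. Denote by $P(k)$ and $R(k)$ the left-hand sides of \ref{eq24} and \ref{eq34} respectively. Note that up to the factor $b/2$, $P(k)=(\alpha_2+\beta_2^k,\alpha_2+\beta_2^k)$ and $R(k)=(\alpha_1+\beta_1^k,\alpha_1+\beta_1^k)$, so the lemma asserts that these sums are non-positive roots (hence imaginary or zero), the companion statement to Lemma \ref{lem1}.

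First I would handle the base cases. Substituting $c_0=d_0=0,\ c_1=d_1=1$ yields $P(0)=R(0)=a+b-ab$, which is $\le 0$ exactly when $b\ge 2$ (using $a\ge b\ge 2$, so $ab\ge 2a\ge a+b$); for $b=1$ this equals $1>0$, matching the exclusion of $k=0$ in case $(2)$. Using $c_2=a,\ d_2=b$ gives $P(1)=a(4-ab)$ and $R(1)=b(4-ab)$, both $\le 0$ since $ab\ge 4$. In the $b=1,a\ge 5$ case I additionally verify $k=2$: with $c_3=d_3=ab-1$, a direct substitution gives $P(2)=R(2)=a+b+3ab-a^2b^2$, which is $\le 0$ whenever $ab\ge 4$.

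For the inductive step I would substitute $c_{k+3}=(ab-1)c_{k+1}-ad_k$ and $d_{k+2}=bc_{k+1}-d_k$ into $P(k+2)$. Since $(\cdot,\cdot)$ is Weyl-invariant, the quadratic part is expected to satisfy
$$bc_{k+3}^2+ad_{k+2}^2-abc_{k+3}d_{k+2}=bc_{k+1}^2+ad_k^2-abc_{k+1}d_k,$$
and this can be checked by direct expansion. The linear-constant part $2ad_{k+2}-abc_{k+3}+a$ simplifies to $ab(3-ab)c_{k+1}+a(ab-2)d_k+a$, whose difference from $2ad_k-abc_{k+1}+a$ factors cleanly as $-a(ab-4)(bc_{k+1}-d_k)=-a(ab-4)d_{k+2}\le 0$. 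Hence $P(k+2)\le P(k)$, so the inductive step succeeds. By the symmetry $a\leftrightarrow b$, $c\leftrightarrow d$ of both the recurrences and the pair of inequalities (the substitution turns $P$ into $R$), the analogous identity $R(k+2)-R(k)=-b(ab-4)c_{k+2}\le 0$ holds as well.

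Applying Lemma \ref{lemstar}: when $b\ge 2$, the set of $k$ for which both $P(k)\le 0$ and $R(k)\le 0$ contains $\{0,1\}$ and is closed under $k\mapsto k+2$, so it contains all $k\ge 0$; when $b=1$ and $a\ge 5$, the same set contains $\{1,2\}$ and yields all $k\ge 1$. The only real obstacle is the algebraic bookkeeping in the inductive step; the $W$-invariance of the bilinear form makes it plausible that the quadratic part cancels, but one still has to verify the clean factorization $-a(ab-4)d_{k+2}$ of the residual linear-constant difference, which is what makes the sign control work.
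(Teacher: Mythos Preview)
Your proposal is correct and follows essentially the same approach as the paper: verify two consecutive base values, establish the identity $P(k+2)-P(k)=a(4-ab)d_{k+2}$ (and its companion for $R$) via the recurrences \ref{auxeq}, and invoke Lemma \ref{lemstar}. The only cosmetic difference is that the paper uses $k=1,2$ as the base pair for both cases and checks $k=0$ separately for $b\ge 2$, whereas you use $\{0,1\}$ and $\{1,2\}$ respectively; one small slip is the clause ``$\le 0$ whenever $ab\ge 4$'' for $P(2)=a+b+3ab-a^2b^2$, which fails at $(a,b)=(4,1)$, but since you only invoke $k=2$ in the $b=1,\ a\ge 5$ regime this does not affect the argument.
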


\begin{proof}
    For $k=1$ the above expressions become $a(4-ab)$ and $b(4-ab)$ respectively and for $k=2$, they both become $a+b-ab(ab-3)$. It is easy to see that they are non-positive by the given conditions. Again using the recurrence \ref{recurrenceab} and Equation \ref{auxeq} we obtain
    \begin{align*}
        &bc_{k+3}^2+ad_{k+2}^2-abc_{k+3}d_{k+2}+2ad_{k+2}-abc_{k+3}+a\\
        &=bc_{k+1}^2+ad_k^2-abc_{k+1}d_k+2ad_k-abc_{k+1}+a-a(4-ab)d_{k}+ab(4-ab)c_{k+1}\\
        &=\underbrace{bc_{k+1}^2+ad_k^2-abc_{k+1}d_k+2ad_k-abc_{k+1}+a}+a(4-ab)d_{k+2}
    \end{align*}
    and
    \begin{align*}
        &bc_{k+2}^2+ad_{k+3}^2-abd_{k+3}c_{k+2}-abd_{k+3}+2bc_{k+2}+b\\
        &=bc_k^2+ad_{k+1}^2-abd_{k+1}c_k-abd_{k+1}+2bc_k+b-b(4-ab)c_{k}+ab(4-ab)d_{k+1}\\
        &=\underbrace{bc_k^2+ad_{k+1}^2-abd_{k+1}c_k-abd_{k+1}+2bc_k+b}+b(4-ab)c_{k+2}
    \end{align*}
    By induction the bracketed terms are less than or equal to zero and the remaining terms are less than or equal to zero since $ab\geq 4$. The rest follows from Lemma \ref{lemstar} since Equation \ref{eq24} and \ref{eq34} hold for $k=0$ if $b\geq 2.$
\end{proof}
\begin{rem}
    Similarly, the inequalities obtained above are also independent of those obtained in Lemma \ref{lem1}.
\end{rem}

The following Proposition determines when the sum of two real roots is again a root.
\begin{prop}\label{Nsumroot}
Let the roots $\beta_i^k,\ i=1,2,\ k\in\bz_+$ be defined by Equation \ref{realrootstypes}. Then 
\begin{enumerate}
    \item the sum $\gamma:=\beta_{i}^{k}+\beta_{i}^{j}$ satisfies $(\gamma,\gamma)>0$ for $i=1,2.$ Moreover, if 
    \begin{enumerate}
        \item $b\geq 2,$ then the sum $\beta_{i}^{k}+\beta_{i}^{j},\ i=1,2$ is never a root,
        \item $b=1$ and $a\geq 5,$ then $\beta_{1}^{2k+1}+\beta_{1}^{2k+3}$ and $\beta_{2}^{2k}+\beta_{2}^{2k+2},\ k\in\bz_+$ are all real roots and they are the only pair of roots of same type whose sum is a real root.
    \end{enumerate}
    \item the sum $\gamma:=\beta_{1}^{k}+\beta_{2}^{j}$ is always a root. Moreover, $\gamma$ satisfies $(\gamma,\gamma)\le 0$ unless $b=1$ and $(k,j)=(0,0).$ 
\end{enumerate}
\end{prop}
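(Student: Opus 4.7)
The plan is to mirror the proof of Proposition \ref{keyproppos} with ``$+$'' replacing ``$-$'' throughout. Using the Weyl group identities in Equation \ref{altsumanddiffsame}, any sum $\beta_i^k+\beta_i^j$ of roots of the same type is $W$-conjugate to an element of the form $\alpha_1+\beta_2^r$ or $\alpha_2+\beta_1^s$ for some $r,s\in\bz_+$; by Equation \ref{altdifftype}, any sum $\beta_1^k+\beta_2^j$ of roots of different type is $W$-conjugate to $\alpha_1+\beta_1^r$ or $\alpha_2+\beta_2^s$. Since the bilinear form, $W$-conjugacy class, and real/imaginary-root status are all $W$-invariant, it suffices to analyse these reduced elements.

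For part (1), set $\mu_k:=\alpha_2+\beta_1^k$ and $\nu_k:=\alpha_1+\beta_2^k$. A direct expansion using $(\alpha_1,\alpha_1)=2$, $(\alpha_2,\alpha_2)=2a/b$, $(\alpha_1,\alpha_2)=-a$ shows that, up to the positive factor $2/b$, the norms $(\mu_k,\mu_k)$ and $(\nu_k,\nu_k)$ coincide with the expressions in Equations \ref{eq23} and \ref{eq33}, both of which are strictly positive by Lemma \ref{lem41}. This gives the first assertion of (1). When $b\geq 2$, Lemma \ref{sumnotreal} forbids the sum of two real roots from being real, and the positive norm forbids it from being imaginary, giving (1)(a). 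For (1)(b), with $b=1$ and $a\geq 5$, a direct computation using the recurrence in Equation \ref{recurrenceab} shows $\alpha_1+\beta_2^2=a\alpha_1+\alpha_2=\beta_2^1$, which is a real root; transporting through Equation \ref{altsumanddiffsame} this produces the families $\beta_1^{2k+1}+\beta_1^{2k+3}=\beta_1^{2k+2}$ and $\beta_2^{2k}+\beta_2^{2k+2}=\beta_2^{2k+1}$. To show these exhaust the real root sums of same type, I would invoke \cite[Theorem 3.6]{carbone2021commutator} together with the alternative description of real positive roots in Equation \ref{rootfourtypes} and Lemma \ref{increasing}, ruling out every other candidate $\alpha_1+\beta_2^m$, $\alpha_2+\beta_1^m$ for $m\neq 2$ (the resulting coefficient vectors either become proper multiples of $\alpha_1+\alpha_2$ or fail the divisibility of Lemma \ref{keylem}(1)).

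For part (2), set $\mu_k:=\alpha_1+\beta_1^k$ and $\nu_k:=\alpha_2+\beta_2^k$. An analogous bilinear computation produces, up to the factor $2/b$, exactly the expressions in Equations \ref{eq34} and \ref{eq24}, which by Lemma \ref{lem42} are non-positive for all $k\geq 0$ when $b\geq 2$, and for all $k\geq 1$ when $b=1$ and $a\geq 5$. Since $\mu_k,\nu_k$ are non-zero elements of $Q$, Lemma \ref{keylem}(2) identifies them as imaginary roots. The only remaining case is $b=1$ with $(k,j)=(0,0)$, for which $\gamma=\alpha_1+\alpha_2$ has norm $2>0$ and is directly verified to be a real root by Lemma \ref{keylem}(1), since with $k_1=k_2=1$ both $k_j(\alpha_j,\alpha_j)/(\gamma,\gamma)$ are integers.

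The main obstacle will be the uniqueness half of (1)(b): certifying that the identities $\alpha_1+\beta_2^2=\beta_2^1$ (and its Weyl translates) are the \emph{only} reduced sums of same type that are real roots. This parallels the endgame in the proof of Proposition \ref{keyproppos} and requires the combination of the strict monotonicity in Lemma \ref{increasing}, the integrality constraint in Lemma \ref{keylem}(1), and the classification from \cite[Theorem 3.6]{carbone2021commutator}; all other steps are direct bilinear-form computations that reduce to inequalities already recorded in Lemmas \ref{lem41} and \ref{lem42}.
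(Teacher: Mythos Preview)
Your proposal is correct and follows essentially the same route as the paper's proof: reduce via the Weyl identities in Equations \ref{altsumanddiffsame} and \ref{altdifftype} to the elements $\alpha_1+\beta_2^r,\ \alpha_2+\beta_1^s$ (same type) and $\alpha_1+\beta_1^r,\ \alpha_2+\beta_2^s$ (different type), then invoke Lemmas \ref{lem41} and \ref{lem42} for the norm inequalities, Lemma \ref{sumnotreal} for the $b\geq 2$ case, Lemma \ref{keylem} for the imaginary-root conclusion, and \cite[Theorem~3.6]{carbone2021commutator} together with Lemma \ref{increasing} for the uniqueness in (1)(b). The only cosmetic difference is that the paper phrases the uniqueness step as ``$\beta_2^k+\alpha_1$ is a real root iff $\beta_2^k=\alpha_2$ or $\beta_2^k=(a-1)\alpha_1+\alpha_2$'' and then rules out the first by type, whereas you identify $\alpha_1+\beta_2^2=\beta_2^1$ explicitly before invoking the same external result.
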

\begin{proof}
\begin{enumerate}
    \item It is clear from Equations in \ref{altsumanddiffsame} that it is enough to prove 
\begin{enumerate}
    \item the sums $\alpha_1+\beta_2^k$ and $\alpha_2+\beta_1^k$ have positive norm for any $k\in\bz_+,$
    \item if $b\geq 2,$ then $\alpha_1+\beta_2^r$ and $\alpha_2+\beta_1^r$ are not roots for all $r\geq 1,$ 
    \item if $b=1$ and $a\geq 5,$  then $\alpha_2+\beta_1^r$ is never a root for $r\geq 1$ and $\alpha_1+\beta_2^r$ is a root if and only if $r=2.$
\end{enumerate}
 If we define $\mu_k=\alpha_1+\beta_2^k$ and $\nu_k=\alpha_2+\beta_1^k,$ then simple computations show that 
 $$(\mu_k,\mu_k)=\frac{2}{b}(bc_{k+1}^2+ad_{k}^2-abc_{k+1}d_{k}+2bc_{k+1}-abd_{k}+b),$$ 
 $$(\nu_k,\nu_k)=\frac{2}{b}(bc_k^2+ad_{k+1}^2-abc_kd_{k+1}-abc_k+2ad_{k+1}+a).$$
Therefore $(\mu_k,\mu_k)>0$ and $(\nu_k,\nu_k)>0$ for all $k\geq 0$ by Lemma \ref{lem41}. Now the case $b\geq 2$ follows from Lemma \ref{sumnotreal}.
 We assume for the rest of the proof that $a\geq 5$ and $b=1.$ By \cite[Theorem 3.6]{carbone2021commutator}, Equation \ref{rootfourtypes} and Lemma \ref{increasing}, the sum $\beta_2^k+\alpha_1$ is a real root if and only if $\beta_2^k=\alpha_2$ or $\beta_2^k=(a-1)\alpha_1+\alpha_2$. The case $\beta_2^k=\alpha_2$ is impossible since $\alpha_2$ is a root of type $I.$ If $\beta_2^k=(a-1)\alpha_1+\alpha_2,$ then Lemma \ref{increasing} guarantees $k=2$ and therefore we obtain the required result. The argument for $\alpha_2+\beta_1^k$ is similar.
 \item Again by Equations in \ref{altdifftype} it is enough to prove that
\begin{enumerate}
    \item  the sums $\alpha_1+\beta_1^k$ and $\alpha_2+\beta_2^k$ have non-positive norms for all $k\geq 0$ if $b\geq 2$,
    \item if $b=1$ and $a\geq 5,$ then 
    \begin{enumerate}
        \item $\alpha_2+\beta_2^k$ has non-positive norm for each $k \geq 1,$
        \item $\alpha_1+\beta_1^k$ has non-positive norm for all $k\geq 1$ and for $k=0,$ it is a real root.
    \end{enumerate}
    \end{enumerate}
    If we define $\mu_k=\alpha_1+\beta_1^k$ and $\nu_k=\alpha_2+\beta_2^k$, then it is easy to see that 
    $$(\mu_k,\mu_k)=\frac{2}{b}(bc_k^2+ad_{k+1}^2-abd_{k+1}c_k-abd_{k+1}+2bc_k+b),$$
    $$(\nu_k,\nu_k)=\frac{2}{b}(bc_{k+1}^2+ad_k^2-abc_{k+1}d_k+2ad_k-abc_{k+1}+a).$$
    Using Lemma \ref{keylem} and Lemma \ref{lem42}, we obtain that $\mu_k$ and $\nu_k$ are imaginary roots for $k\geq 1$ (resp. $k\geq 0$) if $a\geq 5$ and $b=1$ (resp. $b\geq 2$). In the former case, the root $\beta_1^0+\beta_2^0$ is real. This proves the Proposition.
\end{enumerate}
\end{proof}

We are now ready to prove the main Theorem of this Section, namely Theorem \ref{NThmab}.
\medskip

\noindent \textbf{Proof of Theorem \ref{NThmab}}: Let $\Sigma\subseteq \Delta_{\mathrm{re}}$ be a $\pi$-system in $\Delta.$ If either $\Sigma\subseteq \Delta^+$ or $\Sigma\subseteq \Delta^-,$ then the result follows from Theorem \ref{Thmab}. If $\{\alpha,-\beta\}\subseteq\Sigma$ for some $\alpha,\beta\in \Delta^+_{\mathrm{re}},$ then Proposition \ref{Nsumroot} guarantees that $\alpha=\beta_i^{k},\beta=\beta_i^j$ for some $j,k\in\bz_+$ and $i\in\{1,2\}.$ Using Theorem \ref{Thmab} and Proposition \ref{Nsumroot} once more, we obtain $\Sigma=\{\alpha,-\beta\}.$ Moreover, if $b=1$ and $a\geq 5,$ then by Proposition \ref{Nsumroot}, we must have $\{j,k\}\neq \{s,s+2\}$ for some $s\in\bz_+$ such that $s$ is even (resp. odd)  if $i$ is even (resp. odd). Converse follows from Proposition \ref{Nsumroot}.\qed

\subsection{}\label{borcherds} For this particular subsection, we shall allow imaginary roots in $\pi$-systems i.e. a $\pi$-system $\Sigma$ is by definition a subset of $\Delta^+$ such that $\alpha-\beta\notin\Delta$ for all $\alpha,\beta\in \Sigma.$ This definition is originally by Naito in \cite{naito1992regular} where he showed that even Lie algebras of Borcherds Kac-Moody types can be embedded inside a Kac-Moody algebra. We will illustrate through examples that, in this scenario, the classification of $\pi$-systems may be a challenging and intricate problem. Let $A$ be the rank $2$ GCM defined by $A_a:=\begin{psmallmatrix}
    2& -a\\ -3& 2
\end{psmallmatrix}$ such that $a\geq 3.$ Define two subsets $\Sigma_1,\Sigma_2\subseteq \Delta^+$ by $$\Sigma_1:=\left\{(2k+1)\alpha_1+\alpha_2:k=1,2,\cdots,\bigg\lfloor\frac{a-2}{2}\bigg\rfloor\right\},$$
$$\Sigma_2:=\{\alpha_2,\alpha_2+a\alpha_1\}\cup\left\{2i\alpha_1+\alpha_2:i=1,2,\cdots,\bigg\lfloor \frac{a}{2}\bigg\rfloor-1\right\}.$$

A straightforward calculation shows that $\Sigma_1,\Sigma_2\backslash\{\alpha_2,\alpha_2+a\alpha_1\}\subseteq \Delta^+_{\mathrm{im}}$ and both $\Sigma_1,\Sigma_2$ are $\pi$-systems. Thus given any positive integer $r\in\bz$ there exists $a\in\bz_+$ and $\pi$-system $\Sigma^{r}$ in $\lie g(A_a)$ such that $|\Sigma^{r}|>r.$ Now if $\Pi=\{\alpha,\beta\}$ is a subset of either $\Sigma_1$ or $\Sigma_2$ such that $|\Pi|\le 2,$ then $\Pi$ is linearly independent and by \cite{naito1992regular}, the subalgebra $\lie g(\Pi)$ of $\lie g$ generated by $x_{\alpha}^\pm,x_{\beta}^\pm$ is a Borcherds Kac-Moody algebra, where for $x_{\alpha}^\pm$ (resp. $x_{\beta}^\pm$) is any non-zero root vector in $\lie g_{\pm \alpha}$ (resp. $\lie g_{\pm\beta}$).

\section{Appendix}\label{app}
In this last Section we shall consider the remaining case namely $a=4$ and $b=1.$ In this case the Lie algebra $\lie g(A)$ is the affine Lie algebra isomorphic to $BC_{2}^{(2)}.$ We shall prove that similar results to Theorem \ref{Thmab} and Theorem \ref{NThmab} hold in this case as well. The set of real roots of $\lie g$ is given by \cite{kac1990infinite}
\begin{equation}\label{realrootreduced}
    \Delta=\{\pm\epsilon+r\delta:r\in\bz\}\cup\{\pm2\epsilon+(2r+1)\delta:r\in\bz\},
\end{equation}
 where $\delta$ is the indivisible imaginary root, $\epsilon\in\bc$ satisfies $(\epsilon,\epsilon)=1$ and $(\epsilon,\delta)=0.$ A basis of $\Delta$ is given by $\alpha_1=\epsilon,\ \alpha_2=-2\epsilon+\delta.$ A simple computation shows that $$(s_2s_1)^k(\epsilon)=\epsilon-k\delta,\ \ (s_1s_2)^k(\epsilon)=\epsilon+k\delta.$$
Following the notation of \ref{rootfourtypes}, we obtain
$$\beta_1^{2k}=-2\epsilon+(2k+1)\delta,\ \ \ \beta_1^{2k+1}=-\epsilon+(k+1)\delta,$$
$$\beta_2^{2k}=\epsilon+k\delta,\ \ \ \beta_2^{2k+1}=2\epsilon+(2k+1)\delta.$$

From the description of the roots above we have for $i=1,2$
\begin{enumerate}
    \item $\beta_i^{j}-\beta_i^{k}$ is real if and only if $j\not\equiv\ k\ \mathrm{mod}\ 2,$
    \item $\beta_i^{j}-\beta_i^{k}$ is imaginary if and only if $j\equiv\ k\ \mathrm{mod}\ 2$
\end{enumerate}
In particular, $\beta_i^{j}-\beta_i^{k}$ is always a root for $j\neq k\in\bz_+,$ $i=1,2.$ 

Similarly, for the roots of different types, we have that the norm of $\beta_1^j-\beta_2^k$ is always positive. Using Equation \ref{realrootreduced}, we obtain that $\beta_1^j-\beta_2^k$ is a root if and only if $j\equiv 1\ \mathrm{mod}\ 2$ and $k\equiv j +3\ \mathrm{mod}\ 4.$ 

Similarly, for the sum of real roots we obtain that 
\begin{enumerate}
    \item For $i=1,2,$ the sum $\beta_i^j+\beta_i^k$ is a root if and only if it is a real root if and only if $$j\equiv i\ \mathrm{mod}\ 2, k\equiv i\ \mathrm{mod}\ 2\text{ and } j\not\equiv k\ \mathrm{mod}\ 4.$$
    \item The sum $\beta_1^j+\beta_2^k$ is always a root for any $j,k\in\bz_+.$ Moreover, $\beta_1^j+\beta_2^k$ is real (resp. imaginary) if and only if $j\equiv k\ \mathrm{mod}\ 2$ (resp. $j\not\equiv k\ \mathrm{mod}\ 2$).
\end{enumerate}

Thus we obtain the classification of $\pi$-systems in this case as follows.
\begin{thm}\label{thm41}
    Let $A=\begin{psmallmatrix}
        2& -4\\ -1 &2
    \end{psmallmatrix}$ be the affine GCM and $\lie g$ be the corresponding affine Kac-Moody algebra. Then 
    \begin{enumerate}
        \item every $\pi$-system $\Sigma\subseteq \Delta^+_{\mathrm{re}}$ of $\lie g$ such that $|\Sigma|>1$ is of the form $\Sigma=\{\beta_1^j,\beta_2^k\}$ for some $j,k\in\bz_+.$ Conversely, $\Sigma=\{\beta_1^j,\beta_2^k\}$ is a $\pi$-system if and only if either $j\not\equiv 1\ \mathrm{mod}\ 2$ or $k\not\equiv j+3\ \mathrm{mod}\ 4.$
        \item every $\pi$-system $\Sigma\subseteq \Delta_{\mathrm{re}}$ such that $|\Sigma|>1$ of $\lie g(A)$ is either given by $(1)$ or is of the form $\Sigma=\{\beta_i^j,-\beta_i^k\}$ for some $j,k\in\bz_+$ and $i=1,2.$ Conversely, the subset $\Sigma=\{\beta_i^j,-\beta_i^k\}$ is a $\pi$-system if and only if at least one of the following conditions holds.
        $$j\not\equiv i\ \mathrm{mod}\ 2,\ \ \ k\not\equiv i\ \mathrm{mod}\ 2,\ \ \ j\equiv k\ \mathrm{mod}\ 4.$$
    \end{enumerate}
\end{thm}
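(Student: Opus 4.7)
\medskip

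\noindent \textbf{Proof proposal for Theorem \ref{thm41}:}

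The plan is to mimic the strategy used to derive Theorems \ref{Thmab} and \ref{NThmab}, exploiting the explicit combinatorial facts about sums and differences of real roots that were gathered immediately before the theorem statement. These facts do all the heavy lifting; the remaining task is to package them into the stated classification.

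For part $(1)$, I would first show that $|\Sigma|\le 2$ by proving that $\Sigma$ cannot contain two roots of the same type. This follows at once from the observation that $\beta_i^j-\beta_i^k$ is always a root for $j\neq k$ and $i=1,2$ (either real when $j\not\equiv k \pmod 2$ or imaginary when $j\equiv k\pmod 2$), so if $\beta_i^j,\beta_i^k\in\Sigma$ then the defining condition $\beta_i^j-\beta_i^k\notin\Delta$ of a $\pi$-system forces $j=k$. Hence any $\pi$-system with $|\Sigma|>1$ must be of the form $\Sigma=\{\beta_1^j,\beta_2^k\}$. Conversely, such a set is a $\pi$-system precisely when $\beta_1^j-\beta_2^k\notin\Delta$, and the preceding case-analysis (using the explicit description \eqref{realrootreduced}) has already shown that this difference is a root if and only if $j\equiv 1\pmod 2$ and $k\equiv j+3\pmod 4$. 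Taking the negation gives the stated characterization.

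For part $(2)$, suppose $\Sigma\subseteq\Delta_{\mathrm{re}}$ with $|\Sigma|>1$ is neither contained in $\Delta^+$ nor in $\Delta^-$ (otherwise part $(1)$ or its negative applies). Then $\Sigma$ contains some positive root $\alpha$ and some $-\gamma$ with $\gamma\in\Delta^+_{\mathrm{re}}$. The $\pi$-system condition $\alpha-(-\gamma)=\alpha+\gamma\notin\Delta$ combined with the fact that $\beta_1^j+\beta_2^k$ is always a root forces $\alpha$ and $\gamma$ to be of the same type, say both $\beta_i^{\bullet}$. Next, I would argue that the positive part of $\Sigma$ has at most one element: if $\beta_i^j,\beta_i^{j'}\in\Sigma$ with $j\neq j'$, then again $\beta_i^j-\beta_i^{j'}\in\Delta$, a contradiction; and similarly for the negative part. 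So $\Sigma=\{\beta_i^j,-\beta_i^k\}$, and one more appeal to the $\pi$-system condition requires $\beta_i^j+\beta_i^k\notin\Delta$. The explicit sum-criterion stated before the theorem says that $\beta_i^j+\beta_i^k$ is a root if and only if $j\equiv i\pmod 2$, $k\equiv i\pmod 2$, and $j\not\equiv k\pmod 4$; negating yields the three alternative conditions listed in the theorem.

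The main conceptual point is that because the $a=4,\,b=1$ affine case admits a closed-form description of $\Delta$ in terms of $\epsilon$ and $\delta$, all the relevant sums and differences can be evaluated by inspection rather than by the induction $(\star)$ machinery of Section \ref{lipisystems}. I do not anticipate any serious obstacle: the argument is almost entirely bookkeeping, and the only point needing a little care is ensuring that the positive and negative halves of a mixed $\Sigma$ carry the same type index $i$, which is exactly where the statement \emph{``$\beta_1^j+\beta_2^k$ is always a root''} is essential.
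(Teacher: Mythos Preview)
Your proposal is correct and follows essentially the same approach as the paper: the appendix simply records the explicit sum/difference facts for the $a=4,\,b=1$ case and then states the theorem as an immediate consequence, exactly as you outline. Your write-up is in fact more detailed than the paper's, which merely says ``Thus we obtain the classification''; the one place you flag as needing care (that a mixed-sign $\Sigma$ must have all roots of a single type $i$) is handled by applying the ``$\beta_1^j+\beta_2^k$ is always a root'' observation to every positive/negative pair in $\Sigma$, which you correctly identify as the essential point.
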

\begin{rem}
    Note that for any $\pi$-system $\Sigma$ given by Theorem \ref{thm41}, the corresponding matrix $B_{\Sigma}$ is always of affine type.
\end{rem}

\bibliographystyle{plain}
\bibliography{bibliography}
\end{document}